\newtheorem{thm}{Theorem}[section]
\newtheorem{rem}{Remark}[section]
\newtheorem{definition}{Definition}[section]
\newtheorem{lem}{Lemma}[section]
\newtheorem{prop}{Proposition}[section]
\newtheorem{cor}{Corollary}[section]
\newtheorem{ex}{Example}[section]
\numberwithin{equation}{section}
\def\HH{ \EuFrak H}
\def\N{{\rm I\kern-0.16em N}}
\def\R{{\rm I\kern-0.16em R}}
\def \E{\mathbb{E}}
\def\P{{\rm I\kern-0.16em P}}
\def\F{{\rm I\kern-0.16em F}}
\def\B{{\rm I\kern-0.16em B}}
\def\C{{\rm I\kern-0.46em C}}
\def\G{{\rm I\kern-0.50em G}}
\newcommand{\iu}{{i\mkern1mu}}
\font\eka=cmex10
\def\ind{\mathrel{\hbox{\rlap{%
\hbox to 7.5pt{\hrulefill}}\raise6.6pt\hbox{\eka\char'167}}}}
\begin{document}

\title{{\Large \bf  Stein characterizations for linear combinations of
    gamma random variables}}
\author[1]{Benjamin Arras}
\author[2]{Ehsan Azmoodeh }
\author[3]{Guillaume Poly}
\author[4]{Yvik Swan }
\affil[1]{{\it Laboratoire Jacques-Louis Lions, Universit\'e Pierre et Marie Curie, France}}
\affil[2]{{\it Faculty of Mathematics, Ruhr University Bochum, Germany}}
\affil[3]{{\it Institut de Recherche Math\'ematiques, Universit\'e de Rennes 1, France}}
\affil[4]{{\it Mathematics department, Universit\'{e} de Li\`ege, Belgium}}
\renewcommand\Authands{ and }

\date{ }

\maketitle

\abstract In this paper we propose a new, simple and explicit
mechanism allowing to derive Stein operators for random variables
whose characteristic function satisfies a simple ODE.  We apply this
to study random variables which can be represented as linear
combinations of (non necessarily independent) gamma distributed random
variables. The connection with Malliavin calculus for random variables
in the second Wiener chaos is detailed. An application to McKay Type I
random variables is also outlined.

 \vskip0.3cm
 \noindent {\bf Keywords}: Stein's method, Second
 Wiener chaos, Multivariate Gamma distribution, McKay distribution.

\noindent{\bf MSC 2010}: 60F05, 60G50, 60G15, 60H07
\tableofcontents

\section{Introduction and overview} 
\subsection{On Stein's method}

Stein's method is a popular and versatile probabilistic toolkit for
stochastic approximation.  Presented originally in the context of
Gaussian CLTs with dependent summands (see \cite{S72}) it has now been
extended to cater for a wide variety of quantitative asymptotic
results, see \cite{Chen-book} for a thorough overview in the context
of Gaussian approximation
or~https://sites.google.com/site/steinsmethod for an up-to-date list
of references on non-Gaussian and non-Poisson Stein-type results. 

Given two random objects $F, F_{\infty}$, Stein's method allows to
compute fine bounds on quantities of the form
\begin{equation*} \sup_{h\in \mathcal{H}} \left|
    \E\left[ h(F) \right] - \E \left[ h(F_{\infty}) \right] \right|
\end{equation*}
with $\mathcal{H}$ some meaningful class of functions with respect to
which both $F$ and $F_{\infty}$ are integrable (Zolotarev's integral
probability metrics \cite{Z83}, which include e.g.\ the total
variation distance and the Kolmogorov distance, are of the above
form).  The method rests on three pins:
\begin{enumerate}[A.]
\item a ``Stein pair'', i.e.\ a linear operator and a class of
  functions $(\mathcal{A}_\infty, \mathcal{F}(\mathcal{A}_\infty))$
  such that $ \E \left[ \mathcal{A}_\infty(f(F_{\infty})) \right] = 0$
  for all test functions $f \in \mathcal{F}(\mathcal{A}_\infty)$;
\item a ``Stein equation and its magic factors'', i.e.\ a contractive
  inverse operator $\mathcal{A}_\infty^{-1}$ acting on the centered
  functions $\bar{h} = h - \E h(F_{\infty})$ in $\mathcal{H}$ and
  tight bounds on $\mathcal{A}_\infty^{-1}(\bar{h})$ and its
  derivatives;
\item handles on the structure of $F$ (such as $F=F_n=T(X_1, \ldots,
  X_n)$ a $U$-statistic, $F = F(X)$ a functional of an isonormal
  Gaussian process, $F$ a statistic on a random graph, etc.). 
\end{enumerate}
Given the conjunction of these three elements one can then apply some
form of transfer principle:
\begin{equation}\label{eq:6}
 \sup_{h\in \mathcal{H}} \left|
    \E\left[ h(F) \right] - \E \left[ h(F_{\infty}) \right] \right| 
   = \sup_{h\in \mathcal{H}} \left|
    \E\left[ \mathcal{A}_\infty\left( \mathcal{A}_\infty^{-1}(\bar h(F)) \right) \right]\right|;
\end{equation}
remarkably the right-hand-side of the above is often much more
amenable to computations than the left-hand-side, even in particularly
unfavourable circumstances. This has resulted in Stein's method
delivering several striking successes (see
\cite{B-H-J,Chen-book,n-pe-1}) which have led the method to becoming
the recognised and acclaimed tool it is today.

Given a target $F_{\infty}$, the identification of an appropriate
Stein operator $\mathcal{A}_{\infty}$ is the cornerstone of Stein's
method.  While historically most practical implementations relied on
adhoc arguments, several general tools exist, including Stein's
\emph{density approach} \cite{S86} and Barbour's \emph{generator
  approach} \cite{B90}.  A general theory for Stein operators is
available in \cite{LRS}. It is easy to see that, given any
sufficiently regular target $F_{\infty}$, there are infinitely many
admissible choices of operator $\mathcal{A}_{\infty}$ and the
difficulty is to identify those that shall lead to quantities useful
for tackling \eqref{eq:6}. In many important cases, particularly
Pearson or Ord random variables, these ``useful'' operators are first
order differential operators (see \cite{dobler}) or difference
operators (see \cite{LS}). Higher order differential operators are
sometimes necessary to characterize more complex distributions, see
\cite{g-variance-gamma,p-r-r} for random variables with densities
satisfying second order differential equations and \cite{g-2normal,
  g-2normal2, g-m-s} for random variables which can be written as the
product of independent Pearson variables satisfying certain
conditions.\\
\\
The purpose of this paper is to add to the literature on Stein's
method by proposing a new, simple and explicit mechanism allowing to
derive Stein operators for random variables whose characteristic
function satisfies a simple ODE.  We apply this to study random
variables which can be represented as linear combinations of (non
necessarily independent) gamma distributed random variables. The
connection with Malliavin calculus for random variables in the second
Wiener chaos is detailed. An application to the study of McKay Type I
random variables is also outlined.

\subsection{The Malliavin-Stein method  and its  extensions}\label{sec:mall-based-appr}
If $F_{\infty}$ is standard Gaussian random variable then the operator
is $ \mathcal{A}_{\infty}f(x) = f'(x) - xf(x)$ with
$\mathcal{F}(\mathcal{A}_{\infty})$ the class of all differentiable
functions such that $\mathbb{E} \left| f'(F_{\infty})
\right|<\infty$. The simple structure of both the operator and the
class, as well as the wide variety of possible choices for $F$, entail
that all stars align beautifully well for a Gaussian target and that
many paths are open for exploration.  A particularly fruitful path was
opened by Ivan Nourdin and Giovanni Peccati who, in \cite{n-pe-ptrf},
identified the possibility of intertwining Stein's method with
Malliavin calculus.  Given a sufficiently regular centered random
variable $F$ with finite variance and smooth density, the first step
in this direction is to define its Stein kernel $\tau_F(F)$ through
the integration by parts formula
\begin{equation}\label{eq:9}
  \E[\tau_F(F) f'(F) ] = \E \left[ F f(F)
\right] \mbox{ for all absolutely continuous } f,
\end{equation}
(see Stein's monograph \cite{S86} for the origins of this concept and
for a detailed study when $F$ is Pearson distributed).  Then, for
$f_h$ a solution to $f_h'(x) - xf_h(x) = h(x) -\E[h(F_{\infty})]$
(i.e.\ $f_h = \mathcal{A}_{\infty}^{-1}(\bar{h})$), we
can write
\begin{align*}
  \E[h(F)] - \E[h(F_{\infty})] &  = \E \left[ f_h'(F)
                                        - F f_h(F) \right]  = \E
                                 \left[ (1-\tau_F(F)) f_h'(F)
                                 \right]. 
\end{align*}
By Cauchy-Schwarz inequality we have
\begin{align*}
  \left|  \E[h(F)] - \E[h(F_{\infty})]  \right| \le \| f_h'\|
  \sqrt{\E \left[ (1-\tau_F(F))^2  \right]}
\end{align*} 
and at this stage two good things happen: (i) the constant
$\sup_{h \in \mathcal{H}}\|f_h'\|$ (which is intrinsically Gaussian
and does not depend on the law of $F$) is bounded for wide and
relevant classes $\mathcal{H}$; (ii) the quantity
\begin{equation}\label{eq:7}
  S(F \, || \, F_{\infty}) = \E \left[ (1-\tau_F(F))^2 \right]
\end{equation}
(called the \emph{Stein discrepancy}) is tractable, via Malliavin
calculus, as soon as $F$ is a sufficiently regular functional of a
Gaussian process because,  in this case,  the Stein kernel is 
$\tau_F(F)   =\langle
DF,-DL^{-1}F\rangle_{\HH}
$, where $D$ and $L^{-1}$ stand for Malliavin derivative and pseudo-inverse Ornstein-Uhlenbeck operators.  These two realizations spawned an entire new field of research known
 as ``Malliavin-Stein method'' or as ``Nourdin-Peccati'' method, see
 \cite{n-pe-ptrf, n-pe-1} or the dedicated webpage
 https://sites.google.com/site/malliavinstein.

Extensions of the Malliavin-Stein method outside of the Gaussian
framework have been studied as well. The first natural target to
tackle is $F_{\infty} = 2 G - d, d>0$ where $G$ has Gamma law with
parameter $d/2$ (i.e.\ $F_{\infty}$ is centered Gamma) with operator
\begin{equation}\label{eq:13}
  \mathcal{A}_{\infty} f(x) = 2(x+d) f'(x) - xf(x),
\end{equation} 
see \cite{n-pe-2,a-m-m-p}. Mimicking the Gaussian approach outlined above, one
captures the difference in law between $F_{\infty}$ and some arbitrary
$F$ by considering solutions to the ODE $ 2(x+d) f_h'(x) - xf_h(x) =
h(x) -\E[h(F_{\infty})]$ and 
\begin{align*}
   \left|  \E[h(F)] - \E[h(F_{\infty})]  \right| &  =  \mathbb{E} \left[
  2(F+d) f_h'(F) - Ff_h(F) \right] \le \| f_h'\| \mathbb{E}\left|  2(F+d) - \tau_F(F) \right|.
\end{align*}
Again it is necessary, for the last bound to be of interest, that
$f_h'$ be bounded and that $\tau_F(F)$ have good properties; see
\cite[Section 3.3]{n-pe-ptrf} for an illustration as well as
\cite{kusuoka,viquez} for further explorations for targets
$F_{\infty}$ belonging to the Pearson family. 

Important progress in this direction is due to Robert Gaunt
\cite{g-2normal,g-variance-gamma}. In \cite{g-2normal} he shows that
if $F_\infty {=} N_1 \times N_2$ where $N_1$ and $N_2$ are two
independent $\mathscr{N} (0,1)$ random variables then its law is
characterized by a \emph{second order} Stein equation
\begin{equation}
  \label{eq:14}
  x f''(x) + f'(x) - x f(x) = h(x) - \E [h (F_\infty)]
\end{equation}
and in \cite{g-variance-gamma} he studies the entire family of Variance Gamma
distributions (see Example~\ref{ex:vraga} below), obtains Stein
operators $\mathcal{A}_{\infty}$ and also bounds on the solutions on
the resulting Stein equations $\mathcal{A}_{\infty}f = h$ under
smoothness assumptions on $h$.  These results are used in
\cite{thale}, where Gaunt's estimates are combined with higher order Stein kernels firstly introduced in \cite{NP10}
(see below, Definition \ref{Def : Gamma})
in order to extend the scope of the Nourdin-Peccati approach to
targets of the form
\begin{equation}\label{eq:target-2wiener}
 F_\infty = \sum_{i=1}^{d} \alpha_{\infty,i} (N^2_i -1)
\end{equation}
where the coefficients $\{ \alpha_{\infty,i} \, : \, i=1,\cdots,d\}$
are non-zero and distinct and the $N_i$ are i.i.d.\ standard Gaussian
(actually $d=2$ in \cite{thale}, but we shall consider the general
case from here onwards).

As we shall see (e.g.\ in \eqref{eq:SME}), random variables of the
form \eqref{eq:target-2wiener} are characterized by Stein operators
$\mathcal{A}_{\infty}$ which are differential operators of order
$d$. In order for Nourdin and Peccati's approach to function for such
operators one needs to introduce higher order versions of the Stein
kernel~\eqref{eq:9}, one for each degree of the operator. This is
exactly the purpose of Section \ref{sec:stein-oper-second}. \begin{definition}[see \cite{n-pe-1}]\label{Def : Gamma}  Let $F\in
  \mathbb{D}^{\infty}$ the class of infinitely many times Malliavin
  differentiable random variables (see \cite[Chapter 2]{n-pe-1} for a
  detailed discussion). The sequence of random variables
  $\{\Gamma_i(F)\}_{i\geq 0}\subset \mathbb{D}^\infty$ is recursively
  defined as follows. Set $\Gamma_0(F) = F$ and, for every $i\geq
  1$,
  \[\Gamma_{i}(F) = \langle DF,-DL^{-1}\Gamma_{i-1}(F)\rangle_{\HH}.
\]
\end{definition}
\noindent Iterated Gammas from Definition \ref{Def : Gamma} are higher
order versions of the Stein kernel \eqref{eq:9}; by definition we have
$\Gamma_1(F) = \tau_F(F)$.  Also note how
$\mathbb{E} \left[ \tau_F(F) \right] = \mbox{Var}(F)$ and (see again
\cite{n-pe-1}) the %
cumulants of the random element $F$ and the iterated Malliavin
$\Gamma$- operators are linked by the relation
$\kappa_{r+1}(F)=r! \E [\Gamma_r(F)] \mbox{ for } r=0,1,\cdots.$

Targets $F_{\infty}$ of the form \eqref{eq:target-2wiener} admit (see
below) operators $\mathcal{A}_{\infty}f = \sum_{j=0}^d a_j(x) f^{(j)}$
with $a_j$ polynomials and $d \ge 1$. Mimicking the Gaussian and Gamma
cases, a direct extension of the Nourdin-Peccati approach then
consists, in principle, in writing out
\begin{align*}
  \mathbb{E}[h(F)] - \mathbb{E}[h(F_{\infty})] & = \mathbb{E} \left[
                                                 \sum_{j=0}^d a_j(F)
                                                 f_h^{(j)}(F) \right] 
   = \mathbb{E} \left[ \sum_{j=0}^d (\tilde{a}_j(F) - \Gamma_j(F)) f_h^{(j)}(F)
    \right].
\end{align*}
for $f_h$ a solutions to the ODE
$\mathcal{A}_{\infty}f(x) = h(x) - \mathbb{E}[h(F_{\infty})]$. In
order for this approach to be useful it is necessary that both
$\Gamma_j(F)$ and $f_h^{(j)}$ be tractable. So far the question of
finding tight bounds on solutions to such higher order Stein equations
is open; this seems to be a difficult problem to tackle in all
generality.

Estimates on the derivatives of solutions to Stein equations are,
however, not crucial for a version of Stein's method to apply to
variables of the form \eqref{eq:target-2wiener}, see paragraph after Proposition \ref{p:static}, and in more details \cite{aaps17}.

\section{Stein-type characterization and main results}
\subsection{Stein operators for the second Wiener chaos}
\label{sec:stein-oper-second}
The aim of this section is to use the recent findings in \cite{a-p-p} to derive "appropriate" stein equation, i.e. differential operators of finite order with polynomial coefficients, for random elements in the second Wiener chaos. Indeed, following
\cite{n-po-1,a-p-p}, first we define two crucial polynomials $P$ and $Q$ as
follows:
\begin{equation}\label{polynomialP}
 Q(x)=\big( P(x)\big)^{2}=\Big(x \prod_{i=1}^{d}(x - \alpha_{\infty, i} ) \Big)^{2}.
\end{equation}
Next, for random element $F$ living in a finite sum of Wiener chaoses, we consider the following quantity
(whose first occurrence is in \cite{a-p-p})
\begin{equation}\label{eq:Delta}
\Delta(F, F_{\infty}):= \sum_{r=2}^{\text{deg}(Q)} \frac{Q^{(r)}(0)}{r!} \frac{\kappa_{r}(F)}{2^{r-1}(r-1)!}.
\end{equation}
Then the following result holds: 
\begin{prop}\cite[Proposition 3.2]{a-p-p}\label{p:static}
Let $F$ be a centered random variable living in a finite sum of Wiener chaoses. Moreover, assume that
\begin{itemize}
 \item[\bf (i)]$\kappa_r (F) = \kappa_r (F_\infty)$, for all $2 \le r \le d+1=\text{deg}(P)$, and
\item[\bf (ii)] 
\begin{equation*}
              \E \Bigg[ \sum_{r=1}^{d+1} \frac{P^{(r)}(0)}{r! \
                2^{r-1}} \Big( \Gamma_{r-1}(F) -   \E[\Gamma_{r-1}(F)]
              \Big) \Bigg]^2 = 0. 
  \end{equation*}
\end{itemize}
Then, $F \stackrel{{\rm law}}{=} F_\infty,$ and $F$ belongs to the second Wiener chaos.
\end{prop}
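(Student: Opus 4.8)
The plan is to proceed in four steps. \textbf{Step 1:} rewrite hypothesis {\bf (ii)} as an almost sure identity among the iterated $\Gamma$-operators of $F$. \textbf{Step 2:} use that identity to force $F$ into the second Wiener chaos. \textbf{Step 3:} read off from it that the ``eigenvalues'' of $F$ are roots of $P$. \textbf{Step 4:} use the cumulant matching {\bf (i)} to fix their multiplicities. Throughout I set $b_j := P^{(j+1)}(0)/\bigl((j+1)!\,2^{j}\bigr)$ for $0\le j\le d$ and rely on two elementary facts: $b_d=2^{-d}\neq 0$ (since $P$ is monic of degree $d+1$) and $b_0=P'(0)=(-1)^{d}\prod_{i}\alpha_{\infty,i}\neq 0$; and, because $P(0)=0$,
\begin{equation*}
 \sum_{j=0}^{d} b_j\, 2^{j}\, x^{j+1} \;=\; \sum_{k=1}^{d+1}\frac{P^{(k)}(0)}{k!}\,x^{k} \;=\; P(x).
\end{equation*}
For Step 1, note that the random variable inside {\bf (ii)} is centered, so {\bf (ii)} asserts exactly that its $L^2$-norm vanishes, i.e.\ $\sum_{j=0}^{d} b_j\bigl(\Gamma_j(F)-\E[\Gamma_j(F)]\bigr)=0$ almost surely.

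The hard part will be Step 2: showing that this a.s.\ identity collapses $F$ onto the second Wiener chaos $\mathcal H_2$. Write the (finite) chaos expansion $F=\sum_{q\ge 1}I_q(f_q)$ and let $m=\max\{q:f_q\neq 0\}$. A bookkeeping of chaos orders under the recursion $\Gamma_j(F)=\langle DF,-DL^{-1}\Gamma_{j-1}(F)\rangle$ shows that the highest chaos order carried by $\Gamma_j(F)$ is governed by $(j+1)(m-2)+2$, so if $m\ge 3$ these orders strictly increase with $j$; then the top chaos component of $\sum_{j=0}^{d}b_j\Gamma_j(F)$ equals $b_d\neq 0$ times that of $\Gamma_d(F)$ and cannot be cancelled by the lower-order terms, so --- the sum being a.s., hence chaos by chaos, constant --- a maximal iterated contraction of $f_m$ with itself must vanish. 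Converting this (and the analogous statement for a putative first-chaos component, excluded via $b_0\neq 0$) into ``$f_q=0$ for $q\neq 2$'' is where one must invoke the structural analysis of iterated $\Gamma$-operators on a finite sum of Wiener chaoses from \cite{n-po-1,a-p-p}; granting it, $F\in\mathcal H_2$.

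With $F\in\mathcal H_2$ in hand, Step 3 diagonalizes it: $F=\sum_i\beta_i(M_i^2-1)$ with $(M_i)$ i.i.d.\ $\mathscr{N}(0,1)$ and $(\beta_i)$ square-summable and non-zero. Using $L^{-1}F=-F/2$ one computes $\Gamma_j(F)-\E[\Gamma_j(F)]=2^{j}\sum_i\beta_i^{\,j+1}(M_i^2-1)$ for every $j\ge 0$, so the Step 1 identity becomes $\sum_i(M_i^2-1)P(\beta_i)=0$ a.s.\ by the displayed polynomial identity above; independence and non-constancy of the $M_i^2-1$ then force $P(\beta_i)=0$ for all $i$, i.e.\ each $\beta_i$ lies in the root set $\{0,\alpha_{\infty,1},\dots,\alpha_{\infty,d}\}$ of $P$ and hence, being non-zero, in $\{\alpha_{\infty,1},\dots,\alpha_{\infty,d}\}$. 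For Step 4, recall $\kappa_r\bigl(\sum_i c_i(M_i^2-1)\bigr)=2^{r-1}(r-1)!\sum_i c_i^{\,r}$; writing $n_j$ for the number of indices $i$ with $\beta_i=\alpha_{\infty,j}$, hypothesis {\bf (i)} (together with $\kappa_1(F)=\kappa_1(F_\infty)=0$) gives $\sum_{j=1}^d(n_j-1)\,\alpha_{\infty,j}^{\,r}=0$ for $r=2,\dots,d+1$; the coefficient matrix is $\mathrm{diag}(\alpha_{\infty,j}^{2})$ times a Vandermonde matrix in the distinct non-zero $\alpha_{\infty,j}$, hence invertible, forcing $n_j=1$ for each $j$. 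Therefore $F=\sum_{j=1}^d\alpha_{\infty,j}(M_j^2-1)\stackrel{{\rm law}}{=}F_\infty$ and $F\in\mathcal H_2$, as claimed.

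Finally, I note that the purely distributional conclusion $F\stackrel{{\rm law}}{=}F_\infty$ can be obtained without the delicate Step 2: testing the Step 1 identity against polynomials $\psi(F)$ and iterating the Malliavin integration by parts $\E\bigl[(\Gamma_j(F)-\E[\Gamma_j(F)])\,\psi(F)\bigr]=\E[\Gamma_{j+1}(F)\,\psi'(F)]$ shows that the cumulants of $F$ satisfy the same order-$(d+1)$ linear recurrence as those of $F_\infty$, so that {\bf (i)} self-improves to $\kappa_r(F)=\kappa_r(F_\infty)$ for all $r$; since the law of $F_\infty$ is moment-determinate (it is exponentially integrable, being a finite linear combination of independent chi-squares), $F\stackrel{{\rm law}}{=}F_\infty$ follows. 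It is Step 2 that genuinely delivers the extra assertion $F\in\mathcal H_2$.
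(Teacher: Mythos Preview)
The paper does not contain its own proof of this proposition: it is quoted directly from \cite[Proposition 3.2]{a-p-p} and used as a black box (note the citation in the statement header). So there is no in-paper argument to compare against.

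That said, your reconstruction is essentially correct and follows the natural architecture of the original result in \cite{a-p-p}. Steps 1, 3 and 4 are fully rigorous: the identity $\Gamma_j(F)-\E[\Gamma_j(F)]=2^{j}\sum_i\beta_i^{j+1}(M_i^2-1)$ for $F$ in the second chaos is exactly right, the collapse of the a.s.\ identity to $P(\beta_i)=0$ via orthogonality of the $(M_i^2-1)$ is clean, and the Vandermonde system in Step 4 (with $d$ equations for $r=2,\dots,d+1$ and $d$ unknowns $n_j-1$) is invertible as you say. You are also right to flag Step 2 as the genuinely hard part and to be honest that your chaos-order bookkeeping --- while correct as far as it goes (the top order of $\Gamma_j(F)$ is indeed $(j+1)(m-2)+2$) --- only yields ``a certain iterated contraction of $f_m$ vanishes'', not $f_m=0$; closing that gap is precisely the structural work done in \cite{a-p-p,n-po-1} and cannot be shortcut.

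Your final paragraph is a worthwhile observation, and in fact it is the one place where the present paper \emph{does} touch the same circle of ideas: in the converse half of the proof of Theorem~\ref{thm:SMC} (see \eqref{eq:com5} and the sentences following it), the authors turn the vanishing of $\E[F^n\sum a_r(\Gamma_{r-1}(F)-\E\Gamma_{r-1}(F))]$ into a linear recurrence on the cumulants of $F$, exactly as you propose, and then conclude by moment-determinacy. So your alternative route to the distributional identity is not only correct but aligned with the paper's own methodology elsewhere; it simply is not isolated here as a stand-alone proof of Proposition~\ref{p:static}.
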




\noindent As we shall see in Section \ref{sec:stein-oper-second},
Proposition \ref{p:static} leads to Stein operators for random
variables belonging to the Second Wiener chaos. By analogy with the
case of a Gaussian target, it appears that the quantity
$\Delta(F, F_{\infty})$ is the second-chaos equivalent of the
(first-Wiener chaos) Stein discrepancy $S(F || F_{\infty})$
\eqref{eq:7}. Moreover we have shown, in a separate publication
\cite{aaps17}, that estimating this quantity directly (without
requiring any bounds on solutions to Stein equations) leads to bounds
on the 2-Wasserstein distance between the law of $F$ and the law of
$F_{\infty}$.

We now show how item {\bf (ii)} of Proposition \ref{p:static} can be
used to derive a Stein operator for $F_\infty$. To this end, set
\begin{align*}
  &  a_l= \frac{P^{(l)}(0)}{l! 2^{l-1}}, \quad 1 \le l \le d+1,\\
  &  b_l= \sum_{r=l}^{d+1} a_r \E[\Gamma_{r-l+1}(F_\infty)] =
    \sum_{r=l}^{d+1} \frac{a_r}{(r-l+1)!} \kappa_{r-l+2}(F_\infty),
    \quad 2 \le l \le d+1. 
\end{align*}
Now, we introduce the following differential operator of order $d$
(acting on functions $f \in C^d(\R)$) : 
\begin{equation}\label{eq:SME} 
\mathcal{A}_\infty f (x):= \sum_{l=2}^{d+1} (b_l - a_{l-1} x )
f^{(d+2-l)}(x) - a_{d+1} x f(x). 
\end{equation}
Then, we have the following result (see Section
\ref{sec:steins-method-second} for a proof).

\begin{thm}[Stein characterization]\label{thm:SMC}
  Assume that $F$ is a general centered random variable living in a
  finite sum of Wiener chaoses (and hence smooth in the sense of
  Malliavin calculus).  Then $F {=} F_\infty$ (equality in
  distribution) if and only if
  $\E \left[ \mathcal{A}_\infty f (F) \right] =0$ for all  polynomials $f:\R \to \R$.  
  \end{thm}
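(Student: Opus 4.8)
The plan is to establish the two implications separately, reading the ``only if'' direction off the Malliavin integration-by-parts machinery already present in the paper, and the ``if'' direction off a moment recursion together with moment-determinacy of the target.

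For necessity, assume $F\stackrel{\rm law}{=}F_\infty$; since $\E[\mathcal{A}_\infty f(F)]$ depends only on the law of $F$, I would compute directly with the concrete representation $F_\infty=\sum_{i=1}^{d}\alpha_{\infty,i}(N_i^2-1)$. Two ingredients are needed. First, the iterated Malliavin integration by parts: for $G$ in a finite sum of Wiener chaoses and $g$ a polynomial, repeatedly applying $\E[(\Gamma_j(G)-\E\Gamma_j(G))h(G)]=\E[h'(G)\Gamma_{j+1}(G)]$ starting from $\Gamma_0(G)=G$ yields $\E[G\,g(G)]=\sum_{r\ge 1}\E[\Gamma_r(G)]\,\E[g^{(r)}(G)]$, a finite sum because $g^{(r)}\equiv 0$ for $r>\deg g$. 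Second, the structural identity for the target (\cite{n-po-1,a-p-p}): $\sum_{l=1}^{d+1}a_l\Gamma_{l-1}(F_\infty)$ is a.s.\ constant — this is exactly hypothesis {\bf (ii)} of Proposition \ref{p:static} being fulfilled by $F_\infty$ itself — whence, applying the linear map $\langle DF_\infty,-DL^{-1}\,\cdot\,\rangle$ (which annihilates constants) repeatedly, $\sum_{l=1}^{d+1}a_l\Gamma_{l-1+k}(F_\infty)=0$ for every $k\ge 1$. Plugging the first identity into $\E[\mathcal{A}_\infty f(F_\infty)]$ after rewriting $\mathcal{A}_\infty f=\sum_{l=2}^{d+1}b_lf^{(d+2-l)}-x\sum_{l=1}^{d+1}a_lf^{(d+1-l)}$ and expanding each $b_l=\sum_{r\ge l}a_r\E[\Gamma_{r-l+1}(F_\infty)]$, a reindexing matches every term of derivative order $\le d$ produced by the $x$-part with one produced by the $b_l$'s, and what remains is $-\sum_{n\ge d+1}\E[f^{(n)}(F_\infty)]\big(\sum_{l=1}^{d+1}a_l\E[\Gamma_{n-d-1+l}(F_\infty)]\big)$, which vanishes term by term by the structural identity with $k=n-d\ge 1$.

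For sufficiency, assume $\E[\mathcal{A}_\infty f(F)]=0$ for all polynomials $f$, and test with $f(x)=x^k$, $k=0,1,2,\dots$. Since $P$ is monic of degree $d+1$ one has $a_{d+1}=2^{-d}\ne 0$; using $(x^k)^{(j)}=\tfrac{k!}{(k-j)!}x^{k-j}$ (terms with negative exponent being absent), the only moment of order exceeding $k$ entering $\E[\mathcal{A}_\infty(x^k)(F)]$ is $\E[F^{k+1}]$, from the term $-a_{d+1}xf(x)$, so the $k$-th relation has the form $a_{d+1}\E[F^{k+1}]=L_k\big(\E[F^0],\dots,\E[F^k]\big)$ with $L_k$ an explicit linear form depending only on the $a_l,b_l,k$. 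This is a forward recursion which, together with $\E[F^0]=1$, determines every moment of $F$. By the necessity part the moments of $F_\infty$ satisfy the very same recursion with the same initial value, hence $\E[F^k]=\E[F_\infty^k]$ for all $k\ge 0$. Finally $F_\infty$, being a linear combination of independent centered Gamma random variables, has a moment generating function in a neighbourhood of the origin and is therefore moment-determinate, so matching all moments forces $F\stackrel{\rm law}{=}F_\infty$; that $F$ then belongs to the second Wiener chaos follows, e.g., from Proposition \ref{p:static}.

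The hardest part will be the bookkeeping in the necessity direction — organising the two reindexed sums so that the low-order terms cancel in pairs — together with the more substantive point of invoking the structural identity in a form genuinely available from the literature: Proposition \ref{p:static} is phrased as a sufficient condition, so one needs the separate fact (from \cite{n-po-1,a-p-p}) that $F_\infty$ itself verifies its hypothesis {\bf (ii)}. On the sufficiency side the only non-algebraic ingredient is moment-determinacy of the target, which is classical. As an alternative one can treat both directions at once by Fourier duality: testing with $f(x)=e^{\iu tx}$ — legitimate because polynomials already suffice and the relevant quantities are analytic — shows that $\E[\mathcal{A}_\infty f(F)]=0$ for all polynomials $f$ is equivalent to the characteristic function of $F$ solving the first-order linear ODE with polynomial coefficients satisfied by $\prod_{i=1}^{d}e^{-\iu t\alpha_{\infty,i}}(1-2\iu t\alpha_{\infty,i})^{-1/2}$, after which uniqueness for that ODE together with moment-determinacy concludes.
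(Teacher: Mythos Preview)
Your argument is correct, and shares its core with the paper's proof while organizing the bookkeeping differently in each direction.

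For necessity, the paper stops the iterated Malliavin integration by parts at the fixed level $d$, obtaining the master identity
\[
\E[\mathcal{A}_\infty f(F)]=-\E\Big[f^{(d)}(F)\sum_{r=1}^{d+1}a_r\big(\Gamma_{r-1}(F)-\E\Gamma_{r-1}(F)\big)\Big]
+\big(\text{cumulant-difference terms}\big),
\]
and then kills the first term for $F=F_\infty$ by Cauchy--Schwarz and $\Delta(F_\infty,F_\infty)=0$. You instead push the integration by parts all the way, expand everything in $\E[\Gamma_r(F_\infty)]\,\E[f^{(n)}(F_\infty)]$, cancel the orders $1\le n\le d$ against the $b_l$-terms, and annihilate the orders $n\ge d+1$ via the iterated identities $\sum_l a_l\Gamma_{l-1+k}(F_\infty)=0$. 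Both routes rest on the same structural fact from \cite{a-p-p} (that $\sum_l a_l\Gamma_{l-1}(F_\infty)$ is a.s.\ constant), so the substance is the same; your version trades the Cauchy--Schwarz step for an explicit term-by-term cancellation.

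For sufficiency, your route is more elementary than the paper's. The paper re-uses its master identity to first match the low-order cumulants and then extract, via further Malliavin integration by parts, a linear recursion on \emph{cumulants}. You bypass Malliavin calculus entirely: testing with $f(x)=x^k$ yields a forward recursion for the \emph{moments} (since $a_{d+1}=2^{-d}\ne0$ multiplies the unique top-order moment), the target satisfies the same recursion by the necessity part, and moment-determinacy of $F_\infty$ concludes. This buys simplicity and makes clear that the ``if'' direction uses nothing about $F$ beyond integrability of polynomials; the paper's route, on the other hand, keeps both directions unified through a single identity and ties more visibly into the $\Delta$-discrepancy framework used elsewhere in the paper. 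Your closing Fourier remark is essentially the mechanism of Lemma~\ref{sec:four-appr-stein-1}, with the passage from polynomial to exponential test functions justified by analyticity.
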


 \begin{ex}{ \rm Consider the special case of only two non-zero distinct eigenvalues $\lambda_1$ and $\lambda_2$, i.e.
\begin{equation}\label{eq:target}
F_\infty=\lambda_1(N^2_1 -1) + \lambda_2 (N^2_2 -1)
\end{equation}
where $N_1, N_2 \sim \mathscr N (0,1)$ are independent. In this case, the polynomial $P$ takes the form $P(x)=x (x - \lambda_1)(x - \lambda_2)$. Simple 
calculations reveal that $P'(0)=\lambda_1 \lambda_2, P''(0)= -2 (\lambda_1 + \lambda_2)$, and $P^{(3)}(0)=3!$. Also, $\kappa_2(F_\infty)=\E [\Gamma_{1}(F_\infty)]=2(\lambda_1^2 + \lambda_2^2)$, and 
$\kappa_3(F_\infty)= 2 \E [\Gamma_{2}(F_\infty)]= 4 (\lambda_1^3 + \lambda_2^3)$. Then, the Stein equation $(\ref{eq:SME})$ reduces to 
\begin{equation}\label{eq:stein-equation-2}
 \mathcal{A}_\infty f (x) = -4 (\lambda_1 \lambda_2 x + (\lambda_1 + \lambda_2) \lambda_1 \lambda_2) f''(x) + 2 \left(\lambda_1^2 + \lambda_2^2+ (\lambda_1 + \lambda_2) x \right) f'(x) - x f(x) 
 \end{equation}
 We also remark that when $\lambda_1=-\lambda_2=\frac12$, and hence
 $F_\infty \stackrel{\text{law}}{=} N_1 \times N_2$, the Stein's
 equation $(\ref{eq:stein-equation-2})$ coincides with that in
 \cite[equation (1.9)]{g-2normal}. One has to note that for general $\lambda_1$ and $\lambda_2$, the random variables of the form $(\ref{eq:target})$ lie outside the {\it Variance-Gamma} class, see also Example \ref{ex:vraga}. 
 }
 \end{ex}

\subsection{A Fourier approach to Stein characterizations}
\label{sec:four-appr-stein}
 
The characteristic functions $\phi_F(\xi) = \mathbb{E}[e^{\iu \xi F}]$
(we drop the indexation in ${\infty}$ and write $F$ instead of
$F_{\infty}$, $\alpha_i$ instead of $\alpha_{\infty, i}$ etc.\ from now on) of random variables of the form
\eqref{eq:target-2wiener} satisfy a simple ODE with polynomial
coefficients, namely
\begin{equation}\label{eq:12}
 {\prod_{j=1}^d (1-2\iu \xi\alpha_j)} \phi_F'(\xi) = {-2\xi  \sum_{j=1}^d
  \alpha_j^2 \prod_{k\neq j}(1-2\iu \xi \alpha_k)} \phi_F(\xi). 
\end{equation}
Such are but particular cases of a wide family of variables to which
the following simple lemma applies (see
Section~\ref{sec:steins-method-second} for a proof).

\begin{lem}\label{sec:four-appr-stein-1}
  Let $(a_k)_{0\le k \le d}$ and $(b_k)_{0 \le k \le d'}$ be real
  numbers and consider the polynomials
  $A_d(\xi) = \sum_{k=0}^d a_k \xi^k$ and
  $B_{d'}(\xi) = \sum_{k=0}^{d'} b_k \xi^k$ with $d, d'\in \N$. Suppose that the random
  variable $F$ has differentiable characteristic function $\phi_F$
  such that
  \begin{equation}
    \label{eq:1}
    A_d(\iu \xi) \phi_F'(\xi) = \iu B_{d'}(\iu \xi)  \phi_F(\xi)
  \end{equation}
  for all $\xi\in \R$.  Let $Y$ be a real valued random variable such
  that $\E[|Y|]<+\infty$. Then $Y \stackrel{\mathrm{law}}{=} F$ if and
  only if
\begin{equation}
  \label{eq:4}
  \mathbb{E} \left[   F   \mathcal{A}_df(F) -
\mathcal{B}_{d'}f(F) \right] = 0
\end{equation}
for all test functions $f \in \mathcal{S}(\R)$ the Schwartz space of
smooth functions with rapid decrease, where
\begin{align}
  \label{eq:8}
   \mathcal{A}_d = \sum_{k=0}^d a_k \frac{d^k}{dx^k}  \mbox{ and }
   \mathcal{B}_{d'} = \sum_{k=0}^{d'} b_{k} \frac{d^k}{dx^k}. 
\end{align}
The differential operator
$f \mapsto \mathcal{A}_{\infty}f(x) = x \mathcal{A}_df(x) -
\mathcal{B}_{d'}f(x)$ is a Stein operator for $F$ with Stein class
$\mathcal{S}(\R)$.
\end{lem}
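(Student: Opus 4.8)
The plan is to transfer the ODE~\eqref{eq:1} into a statement about expectations by means of the Fourier inversion/duality formula $\E[g(F)] = \tfrac{1}{2\pi}\int_{\R} \widehat g(-\xi)\,\phi_F(\xi)\,d\xi$, valid for $g\in\mathcal S(\R)$ since then $\widehat g\in\mathcal S(\R)$ and $\phi_F$ is bounded. The central observation is that differentiation of $\phi_F$ corresponds to multiplication by (a constant times) the variable, and multiplication of $\phi_F$ by $\iu\xi$ corresponds to differentiation; concretely, for $f\in\mathcal S(\R)$ one has $\widehat{(x^kf)}(\xi)$ expressible through $\widehat f^{(k)}(\xi)$ and $\widehat{f^{(k)}}(\xi)=(\iu\xi)^k\widehat f(\xi)$. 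Hence the polynomial-in-$\iu\xi$ multipliers $A_d(\iu\xi)$ and $B_{d'}(\iu\xi)$ acting on $\phi_F$ become, on the test-function side, exactly the differential operators $\mathcal A_d$ and $\mathcal B_{d'}$ of~\eqref{eq:8}, while the factor $\xi$ coming from $\phi_F'$ produces the multiplication by $F$ inside the expectation.

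The key steps, in order, are as follows. First I would fix $f\in\mathcal S(\R)$ and set $g = f$; since $\phi_F$ is differentiable and $\phi_F,\phi_F'$ are at worst of polynomial growth (in fact bounded plus the stated differentiability suffices after an integration by parts), I would write $\E[F\mathcal A_df(F) - \mathcal B_{d'}f(F)]$ via inversion as an integral against $\phi_F$. Second, I would integrate by parts in $\xi$ to move the derivative produced by the factor ``$F$'' onto $\phi_F$, turning $\int (\cdots)\phi_F$ into $\int(\cdots)\phi_F'$; the boundary terms vanish because $\widehat f\in\mathcal S(\R)$ decays rapidly. Third, I would identify the resulting integrand: the polynomial multiplying $\phi_F'$ is precisely $A_d(\iu\xi)$ and the one multiplying $\phi_F$ is precisely $\iu B_{d'}(\iu\xi)$, so by hypothesis~\eqref{eq:1} the integrand vanishes identically, giving $\E[F\mathcal A_df(F) - \mathcal B_{d'}f(F)] = 0$. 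This proves the ``only if'' direction and also shows $\mathcal A_\infty f(x) = x\mathcal A_df(x) - \mathcal B_{d'}f(x)$ is a Stein operator for $F$ with class $\mathcal S(\R)$.

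For the converse, suppose $\E[|Y|]<\infty$ and $\E[Y\mathcal A_df(Y) - \mathcal B_{d'}f(Y)]=0$ for all $f\in\mathcal S(\R)$. Running the Fourier computation backwards, this says $\int_{\R}\widehat f(-\xi)\big(A_d(\iu\xi)\phi_Y'(\xi) - \iu B_{d'}(\iu\xi)\phi_Y(\xi)\big)\,d\xi = 0$ for all $f\in\mathcal S(\R)$ — here $\phi_Y$ is differentiable because $\E[|Y|]<\infty$. Since $\{\widehat f(-\cdot): f\in\mathcal S(\R)\} = \mathcal S(\R)$ is dense (in the relevant sense) and the bracketed function is continuous, it must vanish: $\phi_Y$ satisfies the same ODE~\eqref{eq:1} as $\phi_F$. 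Both are characteristic functions, hence $\phi_Y(0)=\phi_F(0)=1$; a first-order linear ODE with a prescribed value at $0$ has a unique solution on any interval where the leading coefficient $A_d(\iu\xi)$ is nonzero, so $\phi_Y\equiv\phi_F$ on such intervals, and then on all of $\R$ by continuity (the zero set of $\xi\mapsto A_d(\iu\xi)$ being finite). Therefore $Y\stackrel{\mathrm{law}}{=}F$.

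\textbf{Main obstacle.} The delicate point is the converse's ODE-uniqueness step at the (finitely many) real zeros of $A_d(\iu\xi)$: there one cannot divide by the leading coefficient, so one must argue that the two solutions, already equal on each component of $\{A_d(\iu\xi)\neq 0\}$, match up across these isolated singular points by continuity of characteristic functions, and that no independent branch of solutions can appear. A secondary technical nuisance is justifying the integration by parts and the interchange of expectation and integral: this needs the growth/integrability bookkeeping on $\phi_F$, $\phi_F'$ against the Schwartz decay of $\widehat f$, which is routine but must be stated cleanly (and is where the hypothesis $\E[|Y|]<\infty$, ensuring $\phi_Y\in C^1$, is used).
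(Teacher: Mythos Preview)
Your approach is essentially the same as the paper's: both directions are handled by passing between the space domain and the Fourier domain, using that the polynomial multipliers $A_d(\iu\xi)$, $B_{d'}(\iu\xi)$ correspond to the differential operators $\mathcal A_d$, $\mathcal B_{d'}$, and that multiplication by the variable corresponds to differentiation of $\phi$; the converse then reduces to an ODE uniqueness argument with initial condition $\phi(0)=1$. If anything, you are more careful than the paper on the converse: the paper simply invokes ``Cauchy--Lipschitz'' (in $\mathcal S'(\R)$) without commenting on the finitely many real zeros of $\xi\mapsto A_d(\iu\xi)$, whereas you correctly flag this and indicate the continuity patching argument.
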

\noindent From here onwards all test functions $f$ are supposed to belong to
$\mathcal{S}(\R)$.

\begin{ex}\label{ex:simpleexamples}{\rm 
If $F$ is a normal random variable with mean $\mu$ and variance $\sigma^2$ then $\phi_F(\xi) = e^{\iu \mu \xi -\sigma^2\xi^2/2}$ so that
$\phi_F'(\xi) = \iu (\mu+\sigma^2(\iu \xi)) \phi_F(\xi)$ and, in the
notations of Lemma~\ref{sec:four-appr-stein-1}: $d=0$, $a_0 = 1$,
$d'=1$, $b_0 = \mu$, and $b_1 = \sigma^2$ so that 
\begin{equation*}
  \mathbb{E} \left[ F f(F) - \left( \mu f(F)  + \sigma^2 f'(F) \right) \right]=0
\end{equation*}
as expected (see \cite{Chen-book}). 
}
\end{ex}

\begin{ex} \label{ex:vraga} {\rm If $F$ is Variance-Gamma distributed, then
  its cumulant generating function is, in the classical
  parameterization,
\begin{equation*}
  \log \phi_F(\xi) = \mu \iu \xi +2 \lambda \log \gamma - \lambda \log
  \left( \alpha^2 -(\beta+\iu \xi)^2 \right)
\end{equation*}
so that Lemma~\ref{sec:four-appr-stein-1} applies with $d=2$, $a_0 =
\alpha^2-\beta^2$, $a_1 = -2 \beta$,  and $a_2 = -1$, $d'=2$, $b_0 =
\mu (\alpha^2 - \beta^2)+ 2\lambda \beta$, $b_1 = 2(\lambda-\mu \beta)$, and $b_2
= -\mu$ so that 
\begin{align*}
 & \mathbb{E} \left[ F\left((\alpha^2-\beta^2) f(F) -2\beta f'(F) -
      f''(F)\right)\right. \\
&\qquad \left. - \left((\mu (\alpha^2 -\beta^2)+ 2\lambda \beta) f(F)
      +2(\lambda-\mu \beta)f'(F) - \mu f''(F)  \right)\right]=0
\end{align*}
or, after simplifications, 
\begin{equation*}
  \mathbb{E} \left[(F-\mu) f''(F) + \left( 2\beta(F-\mu) + 2\lambda
    \right) f'(F) + \left(  (\alpha^2-\beta^2)(F-\mu)  + 2\lambda \beta \right) f(F)\right]
  =0.  
\end{equation*}
This is the result obtained by \cite[Lemma 3.1]{g-variance-gamma}.
}
\end{ex}
\begin{ex}\label{ex:four-appr-stein-2}{\rm
  Take $\alpha_i = 1$ for all $i\ge1$ in
  \eqref{eq:target-2wiener}, i.e.\  $F= \sum_{i=1}^{d} (N^2_i -1) \sim \chi^2_{(d)}$ is a centered
  chi-squared random variable with $d$ degree of freedom. The CF of a
  chi-squared distributed random variable is
  $\phi(\xi)= (1-2 \iu \xi)^{-d/2}$, and so 
  \begin{equation*}
    (1-2 \iu \xi) \phi_F'(\xi) = - 2 d   \xi \phi_F(\xi).
  \end{equation*}
  Hence, in the notations of Lemma~\ref{sec:four-appr-stein-1}:
 $d=1$, $a_0 = 1, a_1 = -2$, $d'=1$, $b_0 = 0$, and $b_1=2d$ so that
\begin{equation*}
  \mathbb{E} \left[ F f(F)\right]=2\mathbb{E}\left[ (F+d)f'(F) \right].
\end{equation*}
This is the same as \eqref{eq:13}. 
}
\end{ex}

\begin{ex}\label{ex:maccay}{\rm
  Random variable $F$ follows the type I McKay distribution with
  parameters $a>-(1/2), b>0$ and $c>1$ when its PDF is proportional to
  the function
  \begin{equation}
    \label{eq:22}
\forall x \in \R_+,  f_I(x)  =  x^ae^{-xc/b} I_{a}(x/b) 
  \end{equation}
  where $I_a(\cdot)$ denotes the modified Bessel function of the first kind and of order $a$, see \cite{hoal04} for context and further references.  Direct
  computations lead to
\begin{equation}
  \label{eq:25}
(\log  \phi_F)'(\xi) =  -\iu  \frac{(1+2a)bc  -(1+2a)b^2 (\iu
  \xi)}{1-c^2+2 cb (\iu 
  \xi) - b^2(\iu \xi)^2}. 
\end{equation}
Lemma \ref{sec:four-appr-stein-1} applies and we deduce that if $F$ is
type I McKay then 
\begin{equation}
  \label{eq:26}
  \mathbb{E} \left[ \left( (1-c^2)F +(1+2a)bc \right) f(F) +
    \left(2cb F-(1+2a)b^2  \right) f'(F) - b^2F f''(F) \right] = 0 
\end{equation}
for all $f \in \mathcal{S}(\R)$. 
}
\end{ex}

\subsection{Stein operators for sums of independent gamma}
\label{sec:stein-operators-sums}

Before stating the next theorem, we need to introduce some
notations. For any $d$-tuple $(\lambda_1,...,\lambda_d)$ of real
numbers, we define the symmetric elementary polynomial of order
$k\in\{1,...,d\}$ evaluated at $(\lambda_1,...,\lambda_d)$ by:
\begin{align*}
e_{k}(\lambda_1,...,\lambda_d)=\sum_{1\leq i_1<i_2<...<i_k\leq d}\lambda_{i_1}...\lambda_{i_k}.
\end{align*}
We set, by convention, $e_{0}(\lambda_1,...,\lambda_d)=1$. Moreover,
for any $(c_1,...,c_d)\in\mathbb{R}^*$ and any $k\in\{1,...,d\}$, we
denote by $(\lambda c)$ the $d$ tuple
$(\lambda_1c_1, \ldots, \lambda_dc_d)$ and by $(\lambda c)_k$ the
$d-1$ tuple
$(\lambda_1c_1, \ldots, \lambda_{k-1}c_{k-1},\lambda_{k+1}c_{k+1}, \ldots,
\lambda_dc_d)$

The object of interest in this section are the following
generalizations of \eqref{eq:target-2wiener}: for  $d \ge 1$, $(m_1, \ldots, m_d) \in \mathbb{N}^d$
$(\lambda_1, \ldots, \lambda_d) \in \R^{\star}$  all distinct we
consider 
\begin{equation}
  \label{eq:23}
  F = \sum_{i=1}^d\lambda_i (
\gamma_i(m_i\alpha_i, c_i) - m_i\alpha_i c_i)
\end{equation}
where, for any $(\alpha,c)\in\mathbb{R}_+^*$, we denote by
$\gamma(\alpha,c)$ a gamma random variable with parameters $(\alpha,c)$ with
density
\begin{align*}
  \forall x\in\mathbb{R}_+^*,\
  \gamma_{\alpha,c}(x)=\dfrac{1}{c
  \Gamma(\alpha)}\left(\frac{x}{c}\right)^{\alpha-1}e^{-\frac{x}{c}}
\end{align*}
and CF 
\begin{equation*}
  \phi_{\gamma(\alpha, c)}(\xi) = \left( 1-\iu c \xi \right)^{-\alpha}.
\end{equation*}
The family $\{\gamma_i(m_i\alpha_i, c_i) ,\ i=1,...d\}$ is a collection of independent random variables. Applying Lemma \ref{sec:four-appr-stein-1} we obtain the following
(proof in Section \ref{sec:steins-method-second}). 


\begin{thm}\label{benjarras}
  Let $F$ be as in \eqref{eq:23} and let $Y$ be a real valued random
  variable such that $\E[|Y|]<+\infty$. Then
  $Y \stackrel{\mathrm{law}}{=} F$ if and only if
\begin{align}
  &\E \bigg[
    \big(Y+\sum_{i=1}^d\lambda_i{m_i\alpha_i}{c_i}\big)(-1)^d\bigg(\prod_{j=1}^d{\lambda_j}{c_j}\bigg)f^{(d)}(Y)+\sum_{l=1}^{d-1}(-1)^l\bigg(Ye_{l}((\lambda 
    c))\nonumber\\  
  &\quad \quad +\sum_{k=1}^d\lambda_km_k\alpha_kc_k\left(e_{l}((\lambda
    c))-e_{l}((\lambda c)_k)\right)\bigg)f^{(l)}(Y)+Yf(Y)
    \bigg]=0\label{eq:2} 
\end{align}
for all $f \in \mathcal{S}(\R)$. 
\end{thm}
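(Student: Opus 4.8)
The plan is to apply Lemma~\ref{sec:four-appr-stein-1} directly, so the real work is purely the computation of the characteristic function of $F$ and the identification of the two polynomials $A_d$ and $B_{d'}$ appearing in the ODE~\eqref{eq:1}. First I would write down $\phi_F$: since the $\gamma_i(m_i\alpha_i,c_i)$ are independent, $\phi_F(\xi)=e^{-\iu\xi\sum_i\lambda_im_i\alpha_ic_i}\prod_{i=1}^d(1-\iu\lambda_ic_i\xi)^{-m_i\alpha_i}$, and hence the logarithmic derivative is
\[
\frac{\phi_F'(\xi)}{\phi_F(\xi)} = -\iu\sum_{i=1}^d\lambda_im_i\alpha_ic_i + \sum_{i=1}^d\frac{\iu\lambda_im_i\alpha_ic_i}{1-\iu\lambda_ic_i\xi}.
\]
Multiplying through by the common denominator $\prod_{j=1}^d(1-\iu\lambda_jc_j\xi)$ puts this in the form $A_d(\iu\xi)\phi_F'(\xi)=\iu B_{d'}(\iu\xi)\phi_F(\xi)$ with $A_d(\iu\xi)=\prod_{j=1}^d(1-\iu\lambda_jc_j\xi)$, a polynomial of degree $d$ in $\iu\xi$, and $B_{d'}$ of degree $d$ as well (the two ``products minus $\sum\lambda_im_i\alpha_ic_i\times$ product'' terms each have degree $d$, but as we will see the top-degree parts combine).

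Next I would extract the coefficients $a_k$ and $b_k$. For $A_d$: expanding $\prod_{j=1}^d(1-\iu\lambda_jc_j\xi)=\sum_{k=0}^d(-1)^ke_k((\lambda c))(\iu\xi)^k$, so $a_k=(-1)^ke_k((\lambda c))$; in particular $a_0=1$ and $a_d=(-1)^d\prod_j\lambda_jc_j$. For $B_{d'}$, the bracket $\bigl[-\sum_i\lambda_im_i\alpha_ic_i\prod_j(1-\iu\lambda_jc_j\xi)+\sum_i\lambda_im_i\alpha_ic_i\prod_{j\neq i}(1-\iu\lambda_jc_j\xi)\bigr]$ needs to be expanded: the coefficient of $(\iu\xi)^l$ in $\prod_j(1-\iu\lambda_jc_j\xi)$ is $(-1)^le_l((\lambda c))$ and in $\prod_{j\neq k}(1-\iu\lambda_jc_j\xi)$ is $(-1)^le_l((\lambda c)_k)$, so the $(\iu\xi)^l$-coefficient of $B_{d'}$ is $b_l=\sum_{k=1}^d\lambda_km_k\alpha_kc_k\bigl((-1)^le_l((\lambda c)_k)-(-1)^le_l((\lambda c))\bigr)=(-1)^l\sum_{k=1}^d\lambda_km_k\alpha_kc_k(e_l((\lambda c)_k)-e_l((\lambda c)))$. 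Note $b_0=\sum_k\lambda_km_k\alpha_kc_k(e_0-e_0)=0$, consistent with $F$ being centered, and the top coefficient $b_d$ vanishes because $e_d((\lambda c)_k)=0$ for every $k$ (it is an elementary symmetric polynomial of degree $d$ in only $d-1$ variables), so in fact $d'=d-1$.

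Then I would substitute into~\eqref{eq:4}: the operator is $f\mapsto x\mathcal{A}_df(x)-\mathcal{B}_{d-1}f(x)=x\sum_{k=0}^da_kf^{(k)}(x)-\sum_{k=0}^{d-1}b_kf^{(k)}(x)$, and Lemma~\ref{sec:four-appr-stein-1} gives $Y\stackrel{\mathrm{law}}{=}F$ iff $\E[Y\mathcal{A}_df(Y)-\mathcal{B}_{d-1}f(Y)]=0$ for all $f\in\mathcal{S}(\R)$. Writing this out with $a_0=1$, $a_k=(-1)^ke_k((\lambda c))$ for $1\le k\le d$, $b_0=0$, and $b_l=(-1)^l\sum_k\lambda_km_k\alpha_kc_k(e_l((\lambda c)_k)-e_l((\lambda c)))$ for $1\le l\le d-1$, the $k=d$ term is $(-1)^d(\prod_j\lambda_jc_j)\,Yf^{(d)}(Y)$, the $k=0$ term is $Yf(Y)$, and the middle terms $1\le l\le d-1$ assemble into $(-1)^l\bigl(Ye_l((\lambda c))+\sum_k\lambda_km_k\alpha_kc_k(e_l((\lambda c))-e_l((\lambda c)_k))\bigr)f^{(l)}(Y)$ once one folds the $-b_l$ contribution in (the sign flips since $-b_l=(-1)^l\sum_k\lambda_km_k\alpha_kc_k(e_l((\lambda c))-e_l((\lambda c)_k))$). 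Shifting the summation index so that the $l$-th derivative term reads as in the theorem statement, together with the explicit constant $\sum_i\lambda_im_i\alpha_ic_i$ coming from the centering, reproduces~\eqref{eq:2} exactly. The only mildly delicate point is bookkeeping of signs and the degree drop $d'=d-1$; there is no genuine analytic obstacle, since all the work is done by Lemma~\ref{sec:four-appr-stein-1}.
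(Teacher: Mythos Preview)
Your approach is essentially the paper's: apply Lemma~\ref{sec:four-appr-stein-1} after writing $\phi_F$, clear denominators, and read off the coefficients via elementary symmetric polynomials. The paper does a cosmetic change of variables $\nu_j=1/(\lambda_jc_j)$ before expanding and then converts back at the end, whereas you expand $\prod_j(1-\iu\lambda_jc_j\xi)$ directly; both routes are equivalent.

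There is, however, one computational slip. Your formula
\[
b_l=(-1)^l\sum_{k=1}^d\lambda_km_k\alpha_kc_k\bigl(e_l((\lambda c)_k)-e_l((\lambda c))\bigr)
\]
is correct for all $0\le l\le d$, but it does \emph{not} give $b_d=0$: while $e_d((\lambda c)_k)=0$, the term $e_d((\lambda c))=\prod_j\lambda_jc_j$ survives, so
\[
b_d=-(-1)^d\Bigl(\prod_{j=1}^d\lambda_jc_j\Bigr)\sum_{k=1}^d\lambda_km_k\alpha_kc_k,
\]
and hence $d'=d$, not $d-1$. This is exactly the missing piece in your final assembly: the $f^{(d)}$ coefficient in $x\mathcal{A}_df-\mathcal{B}_{d}f$ is $a_dx-b_d=(-1)^d\bigl(\prod_j\lambda_jc_j\bigr)\bigl(x+\sum_i\lambda_im_i\alpha_ic_i\bigr)$, which is how the shift $Y+\sum_i\lambda_im_i\alpha_ic_i$ in~\eqref{eq:2} arises. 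It does not come ``from the centering'' in any separate step---the centering is already fully accounted for by $b_0=0$. Once this is corrected your argument goes through and matches the paper's.
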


Taking $\alpha_k=1/2$ and $c_k=2$ in the previous theorem implies the following straightforward corollary:

\begin{cor}\label{benjarras2}
Let $d \ge 1$, $q\geq 1$ and $(m_1, \ldots, m_d) \in \mathbb{N}^d$ such that $m_1+...+m_d=q$. Let $(\lambda_1, \ldots, \lambda_d) \in
\R^{\star}$ pairwise distincts and consider: 
\begin{equation*}
   F = \sum_{i=1}^{m_1}\lambda_1(N_i^2-1)+\sum_{i=m_1+1}^{m_1+m_2}\lambda_2(N_i^2-1)+...+\sum_{i=m_1+\ldots+m_{d-1}+1}^q\lambda_d(N_i^2-1),
\end{equation*}
Let $Y$ be a real valued random variable such that $\E[|Y|]<+\infty$. Then $Y \stackrel{\text{law}}{=} F$ if and only if
\begin{align}
  &\E \bigg[ \big(Y+\sum_{i=1}^d\lambda_im_i\big)(-1)^d2^d\bigg(\prod_{j=1}^d\lambda_j\bigg)f^{(d)}(Y)+\sum_{l=1}^{d-1}2^l(-1)^l\bigg(Ye_{l}(\lambda_1,...,\lambda_d)\nonumber\\ &+\sum_{k=1}^d\lambda_km_k\left(e_{l}(\lambda_1,...,\lambda_d)-e_{l}((\underline{\lambda}_k)\right)\bigg)f^{(l)}(Y)+Yf(Y) \bigg]=0,\label{eq:3}
\end{align}
for all $f\in S(\mathbb{R})$.
\end{cor}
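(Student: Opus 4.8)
The plan is to derive Corollary \ref{benjarras2} as an immediate specialization of Theorem \ref{benjarras}. First I would observe that the chi-squared summand $N_i^2$ is exactly a $\gamma(1/2, 2)$ random variable, since its characteristic function is $(1-2\iu\xi)^{-1/2} = (1-\iu c\xi)^{-\alpha}$ with $\alpha = 1/2$ and $c = 2$. Hence a block of $m_k$ i.i.d.\ copies contributes $\sum \lambda_k(N_i^2-1) = \lambda_k(\gamma(m_k/2, 2) - m_k)$, which is precisely the $k$-th term of \eqref{eq:23} with the parameter choice $\alpha_k = 1/2$, $c_k = 2$, so that $m_k\alpha_k c_k = m_k$. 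Thus the random variable $F$ in the corollary is a particular instance of the $F$ in Theorem \ref{benjarras}, and the independence hypothesis is automatically satisfied because the $N_i$ are i.i.d.

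Second, I would simply substitute $c_j = 2$ and $\alpha_k = 1/2$ into the identity \eqref{eq:2}. The leading coefficient $(-1)^d\prod_{j=1}^d \lambda_j c_j$ becomes $(-1)^d 2^d \prod_{j=1}^d \lambda_j$; each elementary symmetric polynomial $e_l((\lambda c))$ evaluated at $(\lambda_1 c_1, \ldots, \lambda_d c_d) = (2\lambda_1, \ldots, 2\lambda_d)$ equals $2^l e_l(\lambda_1, \ldots, \lambda_d)$ by homogeneity of $e_l$ of degree $l$, and similarly $e_l((\lambda c)_k) = 2^l e_l((\underline{\lambda}_k))$ where $(\underline{\lambda}_k)$ denotes the $(d-1)$-tuple with the $k$-th entry removed; and $\lambda_k m_k \alpha_k c_k = \lambda_k m_k$. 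Collecting the common factor $2^l$ inside the $l$-th summand then reproduces \eqref{eq:3} verbatim. The "if and only if" character and the test-function class $\mathcal{S}(\R)$ carry over unchanged from Theorem \ref{benjarras}.

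There is essentially no obstacle here: the only thing to be careful about is the bookkeeping of the homogeneity scaling of the elementary symmetric polynomials and making sure the factor $2^l$ is pulled out consistently in both the $Y e_l$ term and the correction term $\lambda_k m_k(e_l - e_l(\cdot_k))$, which is why the $2^l$ can be placed outside the whole bracket multiplying $f^{(l)}(Y)$. One should also note in passing that the distinctness of the $\lambda_i$ is inherited as a hypothesis and that $m_1 + \cdots + m_d = q$ merely records the total number of Gaussians, playing no role in the algebra. Hence the corollary follows at once.
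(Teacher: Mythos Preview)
Your proposal is correct and follows exactly the paper's own route: the paper simply states that taking $\alpha_k=1/2$ and $c_k=2$ in Theorem~\ref{benjarras} yields the corollary, and your argument spells out precisely this substitution together with the homogeneity of the elementary symmetric polynomials. There is nothing to add.
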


\begin{ex}\label{ex:eicheltha}{ \rm
Let $d=1$, $m_1=q\geq 1$ and $\lambda_1=\lambda>0$. The differential operator reduces to (on smooth test function $f$):
\begin{align*}
-2\lambda(x+q\lambda)f'(x)+xf(x).
\end{align*}
This differential operator is similar to the one characterizing the gamma distribution of parameters $(q/2,1/(2\lambda))$. Indeed, we have, for $F\stackrel{\text{law}}{=}\gamma\big(q/2,1/(2\lambda)\big)$, on smooth test function, $f$:
\begin{align*}
\E\bigg[Ff'\big(F\big)+\big(\frac{q}{2}-\frac{F}{2\lambda}\big)f\big(F\big)\bigg]=0
\end{align*}
We can move from the first differential operator to the second one by performing a scaling of parameter $-1/(2\lambda)$ and the change of variable $x=y-q\lambda$.
}
\end{ex}

\begin{ex}{ \rm
Let $d=2,\ q=2$, $\lambda_1=-\lambda_2=1/2$ and $m_1=m_2=1$. The differential operator reduces to (on smooth test function $f$):
\begin{align*}
\mathcal{A}(f)(x)&=4(x+\left\langle m,\lambda \right\rangle)\lambda_1\lambda_2f''(x)-2\big[xe_1(\lambda_1,\lambda_2)+\lambda_1m_1(e_1(\lambda_1,\lambda_2)-e_1(\lambda_2))\\
&+\lambda_2m_2(e_1(\lambda_1,\lambda_2)-e_1(\lambda_1))\big]f'(x)+xf(x),\\
&=-xf''(x)-f'(x)+xf(x),
\end{align*}
where we have used the fact that
$e_1(\lambda_1,\lambda_2)=\lambda_1+\lambda_2=0,\ e_1(\lambda_2)=\lambda_2=-1/2,\ e_1(\lambda_1)=\lambda_1=1/2$. Therefore, up to a minus sign factor,
we retrieve the differential operator associated with the random
variable $F=N_1\times N_2.$
}
\end{ex}

We conclude this section by comparing the Stein-type operators defined
by the Fourier approach with those obtained by the Malliavin calculus
tools in \eqref{eq:SME} (see Section \ref{sec:steins-method-second}
for a proof).

\begin{prop}\label{same}
  The Stein-type operators defined in Corollary \ref{benjarras2} and
  in (\ref{eq:SME}) coincide, up to some normalizing constant.
\end{prop}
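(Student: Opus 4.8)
The plan is to show that both operators, being Stein operators for the same target law $F_\infty = \sum_{i=1}^d \lambda_i(N_i^2-1)$, are determined up to a scalar by the characteristic-function ODE \eqref{eq:12}; matching the two ODE presentations then gives the identification. First I would specialize Corollary \ref{benjarras2} (equivalently Theorem \ref{benjarras} with $\alpha_k = 1/2$, $c_k = 2$, $m_k$ arbitrary) so that $F$ in \eqref{eq:23} coincides with the chaos target $F_\infty$ of \eqref{eq:target-2wiener} when all $m_i = 1$; more generally the corollary's target is exactly \eqref{eq:target-2wiener} with repeated eigenvalues, which is the same family governed by the polynomial $P$ of \eqref{polynomialP}. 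The key observation is that the operator $\mathcal{A}_\infty$ in \eqref{eq:SME} was built from item (ii) of Proposition \ref{p:static}, i.e.\ from the coefficients $a_l = P^{(l)}(0)/(l!\,2^{l-1})$ and $b_l = \sum_{r=l}^{d+1} a_r \E[\Gamma_{r-l+1}(F_\infty)]$, while the operator in Corollary \ref{benjarras2} was built from Lemma \ref{sec:four-appr-stein-1} applied to the ODE \eqref{eq:12}, whose coefficient polynomials are $A_d(\iu\xi) = \prod_{j=1}^d(1-2\iu\xi\lambda_j)$ and $B_{d'}(\iu\xi) = -2\sum_{j}\lambda_j^2\prod_{k\neq j}(1-2\iu\xi\lambda_k)$.

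The core computation I would carry out is to expand $A_d(\iu\xi) = \prod_{j=1}^d(1-2\iu\xi\lambda_j)$ in powers of $\xi$: its coefficients are, up to the sign and the factors of $2$, the elementary symmetric polynomials $e_l(\lambda_1,\dots,\lambda_d)$, matching the $(-1)^l 2^l e_l(\lambda)$ appearing in \eqref{eq:3}. Likewise I would expand $B_{d'}(\iu\xi) = -2\sum_j \lambda_j^2\prod_{k\neq j}(1-2\iu\xi\lambda_k)$ and recognize its $\xi^l$-coefficient in terms of $e_l(\lambda)$ and the truncated symmetric polynomials $e_l((\underline\lambda)_k)$, which is exactly the bracket multiplying $f^{(l)}$ in \eqref{eq:3}, after using $\lambda_j^2 \prod_{k\neq j}(\cdots) = \lambda_j \cdot \lambda_j\prod_{k\neq j}(\cdots)$ and the identity $\sum_k \lambda_k (e_l(\lambda) - e_l((\underline\lambda)_k)) $-type manipulation (this is the combinatorial identity $\sum_{k} \lambda_k e_{l-1}((\underline\lambda)_k) = l\, e_l(\lambda)$ in disguise). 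On the other side, I would show that the polynomial $x A_d(x) - $ (the appropriate transform of $B_{d'}$) equals, up to scaling, the polynomial whose coefficients are the $a_l$ and $b_l$ of \eqref{eq:SME}: concretely, $Q = P^2$ with $P(x) = x\prod(x-\lambda_i)$, and the Malliavin construction encodes precisely the same data since $P^{(r)}(0)/r!$ are (signed) elementary symmetric functions of the $\lambda_i$ and $\kappa_r(F_\infty) = 2^{r-1}(r-1)!\sum_j \lambda_j^r$ are power sums; Newton's identities then translate the $b_l$ into the same expression found from $B_{d'}$.

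Concretely the steps in order are: (1) record the expansions of $\prod_j(1-2\iu\xi\lambda_j)$ and $\sum_j\lambda_j^2\prod_{k\neq j}(1-2\iu\xi\lambda_k)$ in terms of $e_l(\lambda)$ and $e_l((\underline\lambda)_k)$; (2) read off from Lemma \ref{sec:four-appr-stein-1} the operator $x\mathcal{A}_d - \mathcal{B}_{d'}$ and check it is (a constant multiple of) the operator in Corollary \ref{benjarras2} — this is essentially bookkeeping since \eqref{eq:3} was derived that way, but it fixes normalizations; (3) compute $a_l = P^{(l)}(0)/(l!2^{l-1})$ explicitly: since $P(x) = \sum_{l=0}^{d+1}(-1)^{d+1-l} e_{d+1-l}(\lambda) x^l$ with $e_0 = 1$, we get $a_l = (-1)^{d+1-l} e_{d+1-l}(\lambda)/2^{l-1}$; (4) compute $b_l$ using $\kappa_{s}(F_\infty) = 2^{s-1}(s-1)! p_s$ where $p_s = \sum_j\lambda_j^s$ is the $s$-th power sum, giving $b_l = \sum_{r=l}^{d+1} a_r\, p_{r-l+2}/ 2^{r-l+1}\cdot(\text{const})$; (5) invoke Newton's identity relating $\sum_r e_{d+1-r}(\lambda) p_{r-l+2}$ to the $e$'s of the reduced tuples, thereby matching $b_l$ with the bracket coefficient in \eqref{eq:3}; (6) after aligning the index shift $l \leftrightarrow d+2-l$ between \eqref{eq:SME} and \eqref{eq:3}, conclude the two operators agree up to the overall multiplicative constant coming from the differing powers of $2$ and sign conventions.

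The main obstacle I anticipate is step (5): reconciling the Malliavin-side coefficients $b_l$, which are naturally expressed via cumulants (hence power sums $p_s$), with the Fourier-side coefficients, which come out as combinations of elementary symmetric polynomials $e_l(\lambda)$ and the reduced $e_l((\underline\lambda)_k)$. Bridging these is precisely a Newton-identity computation, but keeping track of the telescoping sums $\sum_{r=l}^{d+1}(\cdots)$ together with the index reversal $d+2-l$ and the scattered factors of $2^{r-1}$, $(r-1)!$, $(-1)^{l}$ is where sign or off-by-one errors would creep in; I would handle it by first verifying the claimed identity on the two lowest-order and two highest-order coefficients (where both sides are short), then giving the general term. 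A clean alternative that sidesteps most of the combinatorics is to note that by Theorem \ref{thm:SMC} and Lemma \ref{sec:four-appr-stein-1} both operators annihilate $F_\infty$ on the polynomials / on $\mathcal{S}(\R)$, both have the form $(\text{degree} \le 1\text{ polynomial in }x)\cdot f^{(j)}$ summed over $0\le j\le d$ with the top coefficient $\propto x$ and the bottom coefficient exactly $x f(x)$; uniqueness of such a Stein operator for a given absolutely continuous target with known characteristic-function ODE of this order forces proportionality, and matching the $f(F)$ term (coefficient $x$ in both, after normalization) pins the constant. I would present the direct coefficient comparison as the main argument and mention this uniqueness viewpoint as a remark.
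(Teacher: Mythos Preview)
Your proposal is correct and follows essentially the same route as the paper: compute $a_l$ and $b_l$ explicitly from $P^{(l)}(0)$ and the cumulants $\kappa_r(F_\infty)=2^{r-1}(r-1)!\sum_j\lambda_j^r$, then reduce the resulting power-sum/elementary-symmetric combination to match the coefficients in \eqref{eq:3}. The only cosmetic difference is that you plan to invoke Newton's identities for the key step~(5), whereas the paper applies the identity $\lambda_j e_{l-1}((\underline{\lambda})_j)=e_l(\lambda)-e_l((\underline{\lambda})_j)$ recursively to telescope the sum; these are equivalent, and your Newton route in fact yields $b_{d+2-k}=2^{k-d}(-1)^{k+1}\bigl(p_1 e_k-(k+1)e_{k+1}\bigr)$ in one line, which is exactly the paper's final expression once you use $\sum_j\lambda_j e_k((\underline{\lambda})_j)=(k+1)e_{k+1}$.
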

\subsection{Stein operators for projections of multivariate gamma }
Independence of the contributions, as required in \eqref{eq:23}, is
not crucial. Indeed, consider all random variables of the form
\begin{equation}
  \label{eq:15}
  F = \left\langle  \Gamma - K, \lambda
  \right\rangle = \sum_{i=1}^d \lambda_i (\Gamma_i -k_i)
\end{equation}
with $K = (k_1, \ldots, k_d)\in \R^d$ and
$\Gamma = (\Gamma_1, \ldots, \Gamma_d)$ a $d$-variate gamma
distributed random variable defined as follows.

\begin{definition}[\cite{KP51}]\label{def:multgamma}
  A random vector $\Gamma = (\Gamma_1, \ldots, \Gamma_d)$ has a
  $d$-variate gamma distribution in the sense of Krishnamoorthy and
  Parthasarathy \cite{KP51} with degree of freedom $\nu = 2\alpha$ and
  covariance matrix $C$ if its characteristic function is 
  \begin{equation}
    \label{eq:16}
    \phi_{\Gamma}(t_1, \ldots, t_d) = \left| I_d - \iu  C T \right|^{-\alpha}
  \end{equation}
  with $t_j \ge 0$ for all $j$, $\left| \cdot \right|$ the determinant
  operator, $I_d$ the $(d \times d)$-identity, $\alpha>0$,
  $T = \mathrm{diag}(t_1, \ldots, t_d)$, and $C$ a symmetric
  positive definite $d \times d$ matrix.
\end{definition}
\noindent Conditions on $C$ and $\alpha$ under which \eqref{eq:16} is
a bona fide characteristic function have been thoroughly adressed in
the literature, see \cite{DVJ97,EK09,royen16} and references therein. In the
sequel we suppose that these conditions are satisfied. 

\begin{lem}\label{lem:sums-dependent-gamma}Let $\Lambda= \mathrm{diag}(\lambda_{1}, \ldots,
  \lambda_d)$ and $C = \left( c_{ij} \right)_{1\le i, j \le
    d}$ a symmetric positive definite matrix. For all $\xi \in \R$ we have 
  \begin{equation}
    \label{eq:20}
    \left| I_d - \iu \xi C \Lambda \right|  = \sum_{j=0}^d  (-1)^j r_j (\iu \xi)^j
  \end{equation}
with $r_0=1$ and 
\begin{equation}
  \label{eq:21}
  r_j =\sum_{\substack{S \subset [d] \\ \#(S) = j}} \left| C
  \right|_S \prod_{j \in S} \lambda_j,
\end{equation}
(the summation in \eqref{eq:21} is over all collections  $S$   of indices in   $[d] = \left\{
  1, \ldots, d \right\}$ with cardinality $\#(S)=j$, and $\left|
  C \right|_S$ is the determinant of the matrix $\left(
  C_{ij}\right)_{i,j\in S}$).
\end{lem}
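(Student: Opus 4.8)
The plan is to read \eqref{eq:20} as the classical expansion of a shifted determinant $\det(I_d + tM)$ in powers of $t$, applied to $M = -C\Lambda$ and $t = \iu\xi$, and then to use the diagonal structure of $\Lambda$ to identify the coefficients with the $r_j$ of \eqref{eq:21}. The first step is to record that, for any $d\times d$ real matrix $M$,
\[
  \det(I_d + tM) = \sum_{j=0}^{d} t^{j}\!\!\sum_{\substack{S\subseteq[d]\\ \#(S)=j}}\!\! \det\big(M_S\big), \qquad M_S := \big(M_{ik}\big)_{i,k\in S},
\]
with the convention $\det(M_\emptyset)=1$. This follows from multilinearity of the determinant in the columns of $I_d+tM$: expanding each column as the sum of the corresponding column of $I_d$ and $t$ times the corresponding column of $M$, one obtains a sum over subsets $S\subseteq[d]$ of the columns at which the "$M$-part" is selected; for fixed $S$ the remaining $d-\#(S)$ columns are standard basis vectors, and iterated Laplace expansion along those columns collapses the determinant to the principal minor $\det(M_S)$, carrying the weight $t^{\#(S)}$. (Equivalently, one may invoke that the coefficients of $\det(I_d+tM)$ are, up to sign, the elementary symmetric functions of the eigenvalues of $M$, i.e.\ the sums of its principal minors.)

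Next I would specialise to $M=-C\Lambda$ and $t=\iu\xi$. Using $\det\big((-C\Lambda)_S\big)=(-1)^{\#(S)}\det\big((C\Lambda)_S\big)$, the identity above becomes
\[
  \det\big(I_d - \iu\xi\, C\Lambda\big) = \sum_{j=0}^{d} (-1)^j(\iu\xi)^j\!\!\sum_{\substack{S\subseteq[d]\\ \#(S)=j}}\!\!\det\big((C\Lambda)_S\big),
\]
which is already of the announced form $\sum_{j=0}^{d}(-1)^j r_j (\iu\xi)^j$ with $r_0=1$ and $r_j=\sum_{\#(S)=j}\det\big((C\Lambda)_S\big)$.

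It then remains to compute these principal minors. Since $\Lambda$ is diagonal, for any index set $S$ the principal submatrix $(C\Lambda)_S$ factors as $C_S\cdot\mathrm{diag}(\lambda_k)_{k\in S}$ with $C_S=(c_{ik})_{i,k\in S}$, hence $\det\big((C\Lambda)_S\big)=|C|_S\prod_{k\in S}\lambda_k$; summing over $S$ with $\#(S)=j$ gives exactly $r_j$ as in \eqref{eq:21} (the name of the product index there being immaterial), and combining with the previous step proves \eqref{eq:20}. The argument is purely algebraic, so the positive-definiteness of $C$ inherited from Definition \ref{def:multgamma} plays no role here — it is only needed elsewhere to guarantee that the underlying characteristic functions are well defined. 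There is no genuine obstacle; the only point deserving a little care is the sign and index bookkeeping in the Laplace expansions of the first step.
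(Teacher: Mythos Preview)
Your proof is correct. The paper does not actually supply a proof of this lemma --- it is stated and then immediately used to compute $\phi_F$, the authors evidently treating the principal-minor expansion of $\det(I_d+tM)$ as standard --- so there is nothing to compare against; your argument via multilinearity in the columns, followed by the factorisation $(C\Lambda)_S = C_S\,\mathrm{diag}(\lambda_k)_{k\in S}$, is exactly the kind of short justification the paper omits.
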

\begin{ex}\label{ex:stein-oper-d3}
  If $d=3$ and $C = \left( c_{i, j} \right)_{1\le i, j \le 3} $ then
\begin{align*}
  r_0 & = 1 \\
  r_1 & = c_{1} \lambda_1 + c_{2} \lambda_2 + c_{3} \lambda_3 \\
  r_2 & = (c_{1}c_{2} - c_{12}^2)\lambda_1\lambda_2 + (c_{1} c_{3}
        -c_{13}^2)\lambda_1 \lambda_3 + (c_{2}
        c_{3}- c_{23}^2)\lambda_2 \lambda_3 \\
  r_3 & = |C| \lambda_1 \lambda_2 \lambda_3
\end{align*}
(we also write $c_j$ instead of $c_{jj}$ for $j=1, 2, 3$). 
\end{ex}
From Lemma \ref{lem:sums-dependent-gamma} we deduce the CF of linear
combinations of marginals of multivariate gamma random vectors: if
$\Gamma$ has marginals $\Gamma_j \sim \gamma(\alpha, c_j)$ and $F$ is
as in \eqref{eq:15} then, letting $\kappa = \sum_{j=1}^d \lambda_j
k_j$: 
\begin{align*}
  \phi_F(\xi)  & = \mathbb{E} \left[ e^{\iu \xi F} \right]  = e^{-\iu
                 \alpha \xi \sum_{j=1}^d \lambda_j k_j} \mathbb{E} \left[
                 e^{\iu  \sum_{j=1}^d (\xi \lambda_j) \Gamma_j} \right] \\
               & =  e^{-\iu \alpha \kappa \xi } \phi_{\Gamma}(\xi
                 \lambda_1, \ldots, \xi \lambda_d) \\
               & =  e^{-\iu \alpha \kappa \xi} \left( \sum_{j=0}^d (-1)^j r_j (\iu \xi)^j \right)^{-\alpha}
\end{align*}
with $(r_j)_{0\le j \le d}$ given in
Lemma~\ref{lem:sums-dependent-gamma}. Taking derivatives  we obtain 
\begin{align*}
  \left( \sum_{j=0}^d (-1)^j r_j (\iu \xi)^j\right) \phi_F'(\xi) =
  - \iu \alpha 
  \left(  \kappa 
    \sum_{j=0}^d (-1)^j r_j (\iu \xi)^j  +  
    \sum_{j=1}^{d} (-1)^{j} j r_{j} (\iu \xi)^{j-1}\right) \phi_F(\xi). 
\end{align*}
Applying Lemma \ref{sec:four-appr-stein-1} we deduce, after
straightforward simplifications:

\begin{thm}\label{theo:stein-oper-ga}
  Let $F$ be defined in \eqref{eq:15} and $(r_j)_{j=1, \ldots, d}$ as
  in \eqref{eq:21}. Set $r_{d+1} = 0$.  Let $Y$ be a real valued
  random variable such that $\E[|Y|]<+\infty$. Then
  $Y \stackrel{\text{law}}{=} F$ if and only if
\begin{align}
 & \mathbb{E} \left[ (F+\alpha(\kappa-r_1)) f(F) + \sum_{j=1}^{d}  (-1)^j  \left(
  r_j(F +\alpha \kappa) - \alpha(j+1)r_{j+1}) \right)f^{(j)}(F)
 \right]   =0 \label{eq:18}
\end{align}
for all test functions $f \in \mathcal{S}(\R)$.
\end{thm}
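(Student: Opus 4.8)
The plan is to recognise the first-order ODE displayed just above the statement as an instance of \eqref{eq:1} in Lemma~\ref{sec:four-appr-stein-1}, apply that lemma, and then rearrange the resulting identity into the form \eqref{eq:18}; apart from one index shift and the collection of like terms there is nothing else to do.

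Concretely, I would start from the relation
\[
\Big(\sum_{j=0}^d(-1)^j r_j(\iu\xi)^j\Big)\phi_F'(\xi)=-\iu\alpha\Big(\kappa\sum_{j=0}^d(-1)^j r_j(\iu\xi)^j+\sum_{j=1}^d(-1)^j j\,r_j(\iu\xi)^{j-1}\Big)\phi_F(\xi),
\]
with $\kappa=\sum_{j=1}^d\lambda_j k_j$ and $r_j$ as in \eqref{eq:21}. This holds for every $\xi\in\R$: $\phi_F$ is differentiable, and the identity \eqref{eq:20} underlying the computation is, by Lemma~\ref{lem:sums-dependent-gamma}, valid for all real $\xi$. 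Reindexing $k=j-1$ in the last sum, $\sum_{j=1}^d(-1)^j j\,r_j(\iu\xi)^{j-1}=-\sum_{k=0}^{d-1}(-1)^k(k+1)r_{k+1}(\iu\xi)^k$, so the display is precisely \eqref{eq:1} with $A_d(\iu\xi)=\sum_{k=0}^d a_k(\iu\xi)^k$, $B_d(\iu\xi)=\sum_{k=0}^d b_k(\iu\xi)^k$ and
\[
a_k=(-1)^k r_k,\qquad b_k=(-1)^k\alpha\big((k+1)r_{k+1}-\kappa r_k\big),\qquad 0\le k\le d,
\]
where we use the convention $r_{d+1}=0$.

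Next I would invoke Lemma~\ref{sec:four-appr-stein-1}: since $\E[|Y|]<\infty$, it gives $Y\stackrel{\text{law}}{=}F$ if and only if $\E[\,F\mathcal{A}_d f(F)-\mathcal{B}_d f(F)\,]=0$ for all $f\in\mathcal{S}(\R)$, all the expectations being finite because $F$, a linear combination of gamma random variables, has moments of every order and $f$ is Schwartz. Substituting $a_k$ and $b_k$ and gathering the coefficient of $f^{(k)}(F)$,
\[
F\mathcal{A}_d f(F)-\mathcal{B}_d f(F)=\sum_{k=0}^d(-1)^k\big(r_k(F+\alpha\kappa)-\alpha(k+1)r_{k+1}\big)f^{(k)}(F);
\]
pulling out the $k=0$ summand, which is $(F+\alpha(\kappa-r_1))f(F)$ since $r_0=1$, leaves exactly the integrand of \eqref{eq:18}. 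Every step being an equivalence, this finishes the argument.

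I do not expect a genuine obstacle here: the mathematical substance is the characteristic-function computation already carried out before the statement, and the remainder is bookkeeping. The only two points worth a sentence are the justification that the $\phi_F$-ODE is valid on all of $\R$ (which reduces to \eqref{eq:20}, proved for all $\xi\in\R$ in Lemma~\ref{lem:sums-dependent-gamma}) and the elementary index shift that converts that ODE into the coefficient list required by Lemma~\ref{sec:four-appr-stein-1}.
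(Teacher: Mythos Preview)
Your proposal is correct and follows exactly the route the paper takes: the paper derives the ODE for $\phi_F$ in the paragraph preceding the theorem and then simply writes ``Applying Lemma~\ref{sec:four-appr-stein-1} we deduce, after straightforward simplifications,'' leaving those simplifications to the reader; you have supplied them explicitly and accurately.
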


\begin{rem}
  If $F$ is of the form \eqref{eq:23} with all shape coefficients
  identical then
  $ F = \sum_{i=1}^d \lambda_i\sum_{j=1}^{m_i}(\gamma_j(\alpha, c_i)-
  \alpha c_i)$. Letting $m= \sum_{j=1}^dm_i$, then $F$ is of the form
  \eqref{eq:15} for $\Gamma$ a $m$-variate gamma random variable with
  $m\times m$ diagonal correlation matrix
  $C = \mathrm{diag}(((c_1)_{m_1}, \ldots, (c_d)_{m_d}))$ (we write
  $(x)_q = (x, \ldots, x)$ a vector of length $q$). Applying Theorem
  \ref{theo:stein-oper-ga} will lead, via \eqref{eq:18}, to an
  operator of order $m>d$ which coincides with \eqref{eq:3} (and thus
  \eqref{eq:SME}) only when $m_i=1$ for all $i$.
\end{rem}

\begin{ex}\label{ex:dis2}
If $d=2$  then 
  $F = \left\langle \Gamma - K ,
    \lambda\right\rangle$ has second-order differential Stein
  operator
\begin{align}\nonumber 
  \mathcal{A}f(x) & = (x+\alpha  (\kappa-r_1))f(x)  - \left\{  r_1x + \alpha 
             \left( r_1 \kappa  -2
           (c_1c_2-c_{12}^2)\lambda_1\lambda_2 \right)\right\} f'(x) \\
  \label{eq:24}
  & \quad + \lambda_1 \lambda_2\big(c_1c_2-c_{12}^2\big)\bigg(x
           + \alpha \kappa  \bigg)f''(x).
\end{align}
(recall that $\kappa = \sum_{j=1}^d \lambda_j k_j$ and $r_1 =
\sum_{j=1}^d \lambda_jc_j$). 
\end{ex}

\subsection{Application: McKay Type I and combinations of two gamma
  variates }\label{sec:applications}

We conclude the paper with applications of the identities in the case
$d=2$. There is interest, even in this simple situation, in obtaining
handles on law of sums and differences of correlated gamma variates as
these have applications in performance analysis, see e.g.\
\cite{hoal04}.  Recall example~\ref{ex:maccay} and the corresponding
operator
\begin{align}\label{eq:29}
\mathcal{A}_{\mathrm{McKay}}f(x) =    \left(x + \frac{(1+2a)bc}{ 1- c^2} \right) f(x) +
    \frac{2cb x-(1+2a)b^2}{1-c^2} f'(x) -\frac{b^2}{1-c^2} x f''(x) 
\end{align}
for type I McKay random variables with parameters $a, b, c$ (see its
pdf defined in \eqref{eq:22}). From \eqref{eq:26} (applied to
functions of the form $f(x) = x^n$, along with a continuity argument
for extending the identity to functions not in $\mathcal{S}(\R)$) we
immediately deduce
\begin{align*}
&  \mathbb{E} \left[ F \right] = \frac{(1+2a)bc}{c^2-1} \\
&  \mathbb{E} [F^2] = \frac{(2a+1)b^2(2(a+1)c^2+1)}{(c^2-1)^2}
\end{align*}
(see \cite[Equation (6)]{hoal04}), as well as the formula 
\begin{equation}
  \label{eq:27}
  (1-c^2) \mathbb{E}[F^{n+1}] + bc(1+2(a+n)) \mathbb{E}[F^n] -
  nb^2(1+2a+n-1) \mathbb{E} [F^{n-1}] = 0
\end{equation}
for all $n\ge 2$. 

\begin{cor}\label{cor:appl-mckay-type}
  McKay Type I random variables can be represented as projections of
  bivariate Gamma random variables with degree of freedom $2\alpha$
  and covariance matrix $C =
  \begin{pmatrix}
    c_1 & c_{12} \\
    c_{12}& c_2
  \end{pmatrix}$ whenever
  \begin{align*}
    &a = \alpha-1/2 \\
    &b = 2\frac{c_1c_2- c_{12}^2}{\sqrt{(c_1+c_2)^2 - 4(c_1c_2- c_{12}^2)}}\\
    & c = \frac{c_1+c_2}{\sqrt{(c_1+c_2)^2 - 4(c_1c_2- c_{12}^2)}}.
  \end{align*}
\end{cor}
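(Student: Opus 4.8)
\textbf{Proof strategy for Corollary \ref{cor:appl-mckay-type}.}

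The plan is to match characteristic functions. By Theorem \ref{theo:stein-oper-ga} (or directly from the computation preceding it), a projection $F = \langle \Gamma - K, \lambda\rangle$ of a bivariate gamma vector with degree of freedom $2\alpha$ and covariance matrix $C$ has characteristic function
\begin{equation*}
  \phi_F(\xi) = e^{-\iu \alpha \kappa \xi}\left( 1 - r_1 (\iu\xi) + r_2 (\iu\xi)^2 \right)^{-\alpha},
\end{equation*}
where, by Example \ref{ex:stein-oper-d3} specialized to $d=2$, $r_1 = c_1\lambda_1 + c_2\lambda_2$, $r_2 = (c_1c_2 - c_{12}^2)\lambda_1\lambda_2$, and $\kappa = \lambda_1 k_1 + \lambda_2 k_2$. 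On the other hand, integrating \eqref{eq:25} with $d=2$ gives the McKay Type I characteristic function
\begin{equation*}
  \phi_F^{\mathrm{McKay}}(\xi) = \left( \frac{1-c^2}{1 - c^2 + 2cb(\iu\xi) - b^2(\iu\xi)^2} \right)^{\frac{1+2a}{2}}
\end{equation*}
(up to the normalizing constant at $\xi = 0$, which is automatic). So the first step is to write down both CFs explicitly and isolate what must be matched: the exponent, and the quadratic polynomial in $\iu\xi$ in the base (after normalizing the constant term to $1$).

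Next I would equate the two. Matching the exponents forces $\alpha = (1+2a)/2$, i.e.\ $a = \alpha - 1/2$, the first claimed identity. For the base, normalize the McKay denominator by dividing through by $1-c^2$, so that
\begin{equation*}
  \phi_F^{\mathrm{McKay}}(\xi) = \left( 1 + \frac{2cb}{1-c^2}(\iu\xi) - \frac{b^2}{1-c^2}(\iu\xi)^2 \right)^{-\alpha}.
\end{equation*}
Comparing coefficients of $(\iu\xi)$ and $(\iu\xi)^2$ with $\phi_F$ yields the two scalar equations $-r_1 = \frac{2cb}{1-c^2}$ and $r_2 = -\frac{b^2}{1-c^2}$, together with the requirement $\kappa = 0$ (so that the linear phase factor $e^{-\iu\alpha\kappa\xi}$ disappears, since McKay is a positive, non-centered distribution with no pure phase term — one takes $K$ so that $\lambda_1 k_1 + \lambda_2 k_2 = 0$). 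From these two equations one solves for $b$ and $c$ in terms of $r_1, r_2$, hence in terms of $c_1, c_2, c_{12}$ and the $\lambda_i$; the cleanest normalization is to absorb the scale by choosing $\lambda_1 = \lambda_2 = 1$ (since any common rescaling of $\lambda$ can be compensated by rescaling $C$), which gives $r_1 = c_1 + c_2$ and $r_2 = c_1 c_2 - c_{12}^2$. Then $b^2/(1-c^2) = -r_2$ and $2cb/(1-c^2) = -r_1$; dividing, $2c/b = r_1/r_2$, and substituting back produces a quadratic determining $c$, whence $c^2 - 1 = -\,r_1^2/(r_1^2 - 4r_2) \cdot (\text{something})$ — carrying through the elimination gives exactly
\begin{equation*}
  c = \frac{r_1}{\sqrt{r_1^2 - 4 r_2}}, \qquad b = \frac{2 r_2}{\sqrt{r_1^2 - 4 r_2}},
\end{equation*}
which upon inserting $r_1 = c_1 + c_2$, $r_2 = c_1c_2 - c_{12}^2$ is the displayed formula in the statement.

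The main obstacle is bookkeeping rather than conceptual: one must verify that the parameter constraints are mutually consistent, i.e.\ that the $b, c$ produced actually satisfy the McKay admissibility conditions $b > 0$ and $c > 1$, and that $r_1^2 - 4r_2 = (c_1 - c_2)^2 + 4 c_{12}^2 > 0$ so the square roots are real — this holds precisely because $C$ is positive definite (forcing $c_1, c_2 > 0$ and $c_1 c_2 > c_{12}^2$, hence $r_2 > 0$ and $r_1 > 0$), and positive definiteness also gives $r_1 = c_1 + c_2 > 2\sqrt{c_1 c_2} > 2\sqrt{c_1 c_2 - c_{12}^2}$ only when $c_{12} \neq 0$; a brief check handles the degenerate directions. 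One should also note that the sign choices in $\lambda_1, \lambda_2$ (here both taken positive so that $F$ is a genuine sum of two correlated gammas supported on $\R_+$, matching McKay's support) are what pin down $\kappa = 0$ as the right centering. Finally, a uniqueness-of-CF argument (characteristic functions determine the law) closes the representation.
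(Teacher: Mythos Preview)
Your approach is correct and leads to the stated formulas, but it takes a different route from the paper. The paper identifies the coefficients of the two \emph{Stein operators}: it specializes Example~\ref{ex:dis2} with $K=0$, $\lambda_1=\lambda_2=1$ to obtain the operator $\mathcal{A}_{G_1+G_2}$ in \eqref{eq:28}, then matches it term-by-term against the McKay operator $\mathcal{A}_{\mathrm{McKay}}$ in \eqref{eq:29}, yielding the same system $2bc/(1-c^2)=-(c_1+c_2)$ and $b^2/(1-c^2)=-(c_1c_2-c_{12}^2)$ together with $a=\alpha-1/2$. You instead go one level down and match the characteristic functions directly, which is more elementary and bypasses the Stein machinery entirely. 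Both routes are valid and produce identical equations (unsurprisingly, since Lemma~\ref{sec:four-appr-stein-1} builds the operator from $(\log\phi_F)'$); the paper's version is there to illustrate that its operator framework recovers known distributional identities, whereas your argument is the shortest path if one only cares about the representation itself. One minor point: your admissibility discussion is slightly garbled---the condition $r_1^2-4r_2=(c_1-c_2)^2+4c_{12}^2>0$ fails precisely when $c_1=c_2$ and $c_{12}=0$ simultaneously (not just when $c_{12}=0$), and $c>1$ then follows from $r_2>0$ alone---but this does not affect the main argument.
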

\begin{rem}
  Corollary \ref{cor:appl-mckay-type} contains Theorems 3, 4 and 5
  from \cite{hoal04}. In that paper they consider also the so-called
  McKay Type II distribution for which our method also applies; we do
  not perform the computations here.
\end{rem}
\begin{proof}
  Taking $K=0$ and $\lambda_1 = \lambda_2 = 1$ in Example
  \ref{ex:dis2} we obtain that combinations of dependent Gamma random
  variables $G_1 \sim \mathrm{Gamma}(\alpha, c_1)$ and
  $G_2 \sim \mathrm{Gamma}(\alpha, c_2)$ with identical shape
  parameter and covariance $C$ have operator
\begin{align}
  \mathcal{A}_{G_1+G_2}f(x) & = (x-\alpha(c_1+c_2))f(x) - \left(
    (c_1+c_2)x -2\alpha(c_1c_2 - c_{12}^2)
  \right)f'(x)\nonumber \\& \qquad  + (c_1c_2 - c_{12}^2)xf''(x).  \label{eq:28}
\end{align}  We identify the coefficients in \eqref{eq:28} and \eqref{eq:29} to
  get the system of 4 equations: 
  \begin{align*}
&  \frac{bc}{1-c^2}(1+2a)   = -\alpha(c_1+c_2), \quad  2\frac{bc}{1-c^2} = -(c_1+c_2) \\
&  \frac{b^2}{1-c^2}(1+2a)  = - 2 \alpha(c_1c_2-c_{12}^2),
  \quad 
 \frac{b^2}{1-c^2} = -    (c_1c_2-c_{12}^2).
  \end{align*}
  Solving for $a, b, c$ in terms of $\alpha, c_1, c_2$ and $c_{12}$ we
  immediately deduce that $a = \alpha - 1/2$ is necessary, so that the
  system reduces to
\begin{align*}
&  b^2 = (c_1c_2-c_{12}^2) (c^2-1), \quad 2 bc =  (c_1+c_2)(c^2-1)
\end{align*}
and  the result follows.
\end{proof}
\noindent
We end this subsection by discussing infinite divisibility of the law of projections of multivariate gamma distribution. Infinite divisibility of multivariate gamma distribution has been addressed thoroughly in the literature (see \cite{Gr84,Ba89,EK06,EK09}). Thanks to the previous corollary, we are able to explicit the Lévy measure of the sum of two dependent gamma random variables using the parametrization $(a,b,c)$ with $a>-(1/2)$, $b>0$ and $c>1$. We have the following straightforward corollary.

\begin{cor}\label{ID2}
Let $(G_1,G_2)$ be a 2-dimensional gamma random vector of parameters $2\alpha>0$ and covariance matrix $C$ such that $c_1c_2>c_{12}^2$ and $c_1+c_2>1$. Then, the law of $G_1+G_2$ is infinitely divisible and its Lévy-Khintchine formula is given by:
\begin{align}\label{eq:ID2}
\forall t\in\mathbb{R},\ \phi_{G_1+G_2}(t)=\exp\bigg(\int_0^{+\infty}\big(e^{itx}-1\big)(\frac{1}{2}+a)\big(e^{-\frac{c-1}{b}x}+e^{-\frac{c+1}{b}x}\big)\frac{dx}{x}\bigg),
\end{align}
with
\begin{align*}
&a=\alpha-\frac{1}{2},\\
&b=2\frac{c_1c_2- c_{12}^2}{\sqrt{(c_1+c_2)^2 - 4(c_1c_2- c_{12}^2)}},\\
&c=\frac{c_1+c_2}{\sqrt{(c_1+c_2)^2 - 4(c_1c_2- c_{12}^2)}}.\\
\end{align*}
Moreover, we have the following identity in law:
\begin{align}\label{eq:IL}
G_1+G_2=\gamma_1+\gamma_2
\end{align}
where $\gamma_1$ and $\gamma_2$ are independent gamma random variables with parameters $(a+1/2,(c-1)/b)$ and $(a+1/2,(c+1)/b)$ respectively.
\end{cor}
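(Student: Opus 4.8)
\emph{Proof proposal.} The plan is to work purely with characteristic functions and never touch a density. Specialising the characteristic-function formula established just before Theorem~\ref{theo:stein-oper-ga} to $d=2$, $\lambda_1=\lambda_2=1$ and $K=0$ (hence $\kappa=0$), and inserting the values $r_1=c_1+c_2$ and $r_2=c_1c_2-c_{12}^2$ furnished by Lemma~\ref{lem:sums-dependent-gamma}, one obtains
\begin{equation*}
  \phi_{G_1+G_2}(\xi)=\bigl(1-(c_1+c_2)\,\iu\xi+(c_1c_2-c_{12}^2)(\iu\xi)^2\bigr)^{-\alpha}=\bigl(1-r_1z+r_2z^2\bigr)^{-\alpha},\qquad z:=\iu\xi,
\end{equation*}
with $r_1>0$ and $r_2>0$ (the latter being the hypothesis $c_1c_2>c_{12}^2$, and also forced by positive definiteness of $C$).

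\emph{Step 1 (factorisation, and the identity in law~\eqref{eq:IL}).} I would factor $1-r_1z+r_2z^2=(1-\mu_1z)(1-\mu_2z)$, where $\mu_1,\mu_2$ are the roots of $\mu^2-r_1\mu+r_2=0$, i.e.\ $\mu_{1,2}=\tfrac12\bigl((c_1+c_2)\pm D\bigr)$ with $D:=\sqrt{(c_1+c_2)^2-4(c_1c_2-c_{12}^2)}$. The radicand equals $(c_1-c_2)^2+4c_{12}^2\ge0$, and is strictly positive except in the trivial case $c_1=c_2,\ c_{12}=0$ (in which $G_1+G_2\sim\gamma(2\alpha,c_1)$ and the statement is immediate); so $\mu_1\neq\mu_2$, while $\mu_1\mu_2=r_2>0$ and $\mu_1+\mu_2=r_1>0$ make both roots positive. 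A one-line rationalisation --- multiply numerator and denominator by $(c_1+c_2)\mp D$ and use $(c_1+c_2)^2-D^2=4r_2$ --- shows $\mu_1=b/(c-1)$ and $\mu_2=b/(c+1)$ with $b,c$ exactly as in the statement; these are the relations already derived in the proof of Corollary~\ref{cor:appl-mckay-type}. Consequently
\begin{equation*}
  \phi_{G_1+G_2}(\xi)=(1-\mu_1\,\iu\xi)^{-\alpha}\,(1-\mu_2\,\iu\xi)^{-\alpha},
\end{equation*}
which is the product of the characteristic functions of two independent gamma variates of common shape $\alpha=a+\tfrac12$ and scales $\mu_1,\mu_2$; since $1/\mu_1=(c-1)/b$ and $1/\mu_2=(c+1)/b$, this is precisely \eqref{eq:IL}. (Alternatively one may first invoke Corollary~\ref{cor:appl-mckay-type} to replace $G_1+G_2$ by a McKay Type~I variable and reach the same factorisation by a partial-fraction decomposition of $(\log\phi_F)'$ in \eqref{eq:25}; the two routes are equivalent.)

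\emph{Step 2 (L\'evy--Khintchine formula~\eqref{eq:ID2} and infinite divisibility).} The only analytic ingredient is the classical Frullani identity: for $\alpha>0$ and $\theta>0$,
\begin{equation*}
  (1-\iu\theta\xi)^{-\alpha}=\exp\Bigl(\int_0^{+\infty}\bigl(e^{\iu\xi x}-1\bigr)\,\alpha\,e^{-x/\theta}\,\frac{dx}{x}\Bigr),
\end{equation*}
which exhibits $\gamma(\alpha,\theta)$ as infinitely divisible with L\'evy measure $\alpha e^{-x/\theta}\,dx/x$ on $(0,+\infty)$. Applying it to each factor of Step~1, with $\theta=\mu_1$ (so $1/\theta=(c-1)/b$) and $\theta=\mu_2$ (so $1/\theta=(c+1)/b$), and multiplying the two exponentials, yields exactly \eqref{eq:ID2}; infinite divisibility of $G_1+G_2$ follows at once, being a sum of two independent infinitely divisible random variables (or, again, directly from the displayed representation). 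The proof is essentially bookkeeping: the only points demanding a little attention are the sign analysis in Step~1 --- the roots of $1-r_1z+r_2z^2$ must come out real, distinct and strictly positive, which is where positive definiteness of $C$ (equivalently $r_2>0$ and $c>1$) is used --- and keeping the scale-versus-rate convention straight, so that the exponents in the L\'evy density appear as $(c-1)/b$ and $(c+1)/b$ rather than their reciprocals.
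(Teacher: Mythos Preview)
Your proof is correct and ultimately rests on the same idea as the paper's, but the routes differ slightly. The paper first invokes Corollary~\ref{cor:appl-mckay-type} to identify $G_1+G_2$ with a McKay Type~I variable, then takes the logarithmic derivative \eqref{eq:25} and performs a partial-fraction decomposition of $(\log\phi)'$, recognising each simple pole as the Laplace transform of an exponential; integrating back gives \eqref{eq:ID2}, and \eqref{eq:IL} is read off from the two gamma pieces. You instead bypass the McKay layer entirely: starting from the explicit quadratic form of $\phi_{G_1+G_2}$, you factor $1-r_1z+r_2z^2$ directly over the reals, identify the two factors as gamma characteristic functions (yielding \eqref{eq:IL} first), and then apply the Frullani integral to each factor to obtain \eqref{eq:ID2}. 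Your factorisation of $\phi$ and the paper's partial-fraction decomposition of $(\log\phi)'$ are of course two faces of the same algebraic identity --- you even flag the paper's route as an equivalent alternative --- but your version is marginally more self-contained since it does not rely on the preceding McKay corollary. Your treatment of the sign/reality of the roots and of the degenerate case $c_1=c_2,\ c_{12}=0$ (where the formulas for $b,c$ break down) is also a welcome piece of care that the paper leaves implicit.
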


\begin{proof}
Let $a,b$ and $c$ be as in the statement of the corollary. By Corollary \ref{cor:appl-mckay-type}, we know that $G_1+G_2$ has the same law as a McKay type I random variable with parameters $(a,b,c)$. Then, by (\ref{eq:25}), we have:
\begin{align*}
(\log  \phi_{G_1+G_2})'(\xi) =  -i  \frac{(1+2a)bc -(1+2a)b^2 (i
  \xi)}{1-c^2+2 cb (i\xi) - b^2(i \xi)^2}. 
\end{align*}
Performing a partial fraction decomposition, we obtain straightforwardly:
\begin{align*}
(\log  \phi_{G_1+G_2})'(\xi)=ib(\frac{1}{2}+a)\bigg(\dfrac{1}{c-1-ib\xi}+\dfrac{1}{c+1-ib\xi}\bigg).
\end{align*}
Now, we note that:
\begin{align*}
\dfrac{1}{c-1-ib\xi}=\int_0^{+\infty}\exp(-(c-1-ib\xi)x)dx
\end{align*}
and similarly for the other term. By standard computations, we obtain formula (\ref{eq:ID2}). The identity (\ref{eq:IL}) follows trivially.
\end{proof}

\section{Proofs}\label{sec:steins-method-second}

\begin{proof}[Proof of Theorem \ref{thm:SMC}]
 Repeatedly using the Malliavin integration by parts formulae
  \cite[Theorem 2.9.1]{n-pe-1}, we obtain for any $2 \le l \le d+2$
  that
\begin{align}
  \E\left[F f^{(d-l+2)}(F) \right]
  =   \E\left[ f^{(d)}(F)
  \Gamma_{l-2}(F) \right] +  \sum_{r=d-l+3}^{d-1} \E\left[
  f^{(r)}(F) \right] \E\left[ \Gamma_{r+l-d-2}(F) \right].\label{eq:com1}
\end{align}
For indices $l=2,3$, the second term in the right hand side of
$(\ref{eq:com1})$ is understood to be $0$. Summing from $l=2$ up to
$l=d+2$, we obtain that

\begin{equation}\label{com2}
\begin{split}
\sum_{l=2}^{d+2} a_{l-1} \, \E\left[F f^{(d-l+2)}(F) \right]  & = \sum_{l=2}^{d+2} a_{l-1}  \, \E\left[ f^{(d)}(F) \Gamma_{l-2}(F) \right] \\
& \hskip1cm +  \sum_{l=4}^{d+2} a_{l-1}  \, \sum_{r=d-l+3}^{d-1} \E\left[ f^{(r)}(F) \right] \E\left[ \Gamma_{r+l-d-2}(F) \right]\\
& =  \sum_{l=1}^{d+1} a_{l}  \, \E\left[ f^{(d)}(F) \Gamma_{l-1}(F) \right] \\
& \hskip1cm +  \sum_{l=3}^{d+1} a_{l}  \, \sum_{r=d-l+2}^{q-2} \E\left[ f^{(r)}(F) \right] \E\left[ \Gamma_{r+l-d-1}(F) \right]\\
& = \sum_{l=1}^{d+1} a_{l}  \, \E\left[ f^{(d)}(F) \Gamma_{l-1}(F) \right] \\
& \hskip1cm +  \sum_{l=2}^{d+1} a_{l}  \, \sum_{r=1}^{l-2} \E \left[ f^{(d-r)}(F) \right] \E \left[ \Gamma_{l-r-1}(F) \right].
\end{split}
\end{equation}
On the other hand, 
\begin{equation}\label{eq:com4}
\begin{split}
\sum_{l=2}^{d+1} b_l \, \E \left[ f^{(d+2-l)}(F) \right] &=
\sum_{l=0}^{d-1} b_{l+2}  \E \left[ f^{(d-l)} (F) \right]\\
&=   \sum_{l=0}^{d-1}  \left[ \sum_{r=l+2}^{d+1} a_r \E (
  \Gamma_{r-l-1}(F_\infty) ) \right] \E \left[ f^{(d-l)}(F) \right]\\ 
&= \sum_{r=2}^{d+1} a_r \sum_{l=0}^{r-2} \E \left[
  \Gamma_{r-l-1}(F_\infty)  \right] \times \E \left[ f^{(d-l)}(F)
\right]. 
\end{split}
\end{equation}
Wrapping up, we finally  arrive at
\begin{equation}\label{eq:com5}
\begin{split}
\E \left[ \mathcal{A}_\infty f (F) \right] & = - \E \Bigg[  f^{(d)}(F) \times \Big( \sum_{r=1}^{d+1} a_r \left[ \Gamma_{r-1}(F) - \E[\Gamma_{r-1}(F)] \right] \Big) \Bigg] \\
& \hskip 1cm +  \sum_{r=2}^{d+1} a_r \sum_{l=0}^{r-2}  \left\{ \E [
  f^{(d-l)}(F) ] \times \Big(  \E \left[ \Gamma_{r-l-1}(F_\infty)
  \right] - \E \left[ \Gamma_{r-l-1}(F) \right] \Big) \right\}\\ 
& = - \E \Bigg[  f^{(d)}(F) \times \Big( \sum_{r=1}^{d+1} a_r \left[
  \Gamma_{r-1}(F) - \E[\Gamma_{r-1}(F)] \right] \Big) \Bigg] \\ 
&\hskip 1cm +  \sum_{r=2}^{d+1} a_r \sum_{l=0}^{r-2} \frac{ \E [ f^{(d-l)}(F) ]}{(r-l-1)!} \times \Big( \kappa_{r-l}(F_\infty) - \kappa_{r-l}(F) \Big).
\end{split}
\end{equation}
We are now in a position to prove the claim.  First we assume that
$F \stackrel{\text{law}}{=} F_\infty$. Then obviously
$\kappa_{r}(F)=\kappa_{r}(F_\infty)$ for $r=2,\cdots,2d+2$, and moreover, random variable $F$ belongs to the second Wiener
chaos. Hence, according to \cite[Lemma 3]{a-p-p}, the Cauchy--Schwarz inequality, and the hypercontractivity property of the Wiener chaoses \cite[Theorem 2.7.2]{n-pe-1}, we obtain that
\begin{equation*}
\begin{split}
\Big \vert \E \left[ \mathcal{A}_\infty f (F) \right] \Big \vert & \le \sqrt{\E \left[f^{(d)}(F) \right]^2} \times 
\sqrt{ \E \Big[ \sum_{r=1}^{d+1} a_r \left( \Gamma_{r-1}(F) -
    \E[\Gamma_{r-1}(F)] \right) \Big]^2} \\
& =  \sqrt{\E \left[f^{(d)}(F) \right]^2} \times \sqrt{\Delta(F,F_\infty)}\\
&=  \sqrt{\E \left[f^{(d)}(F) \right]^2} \times \sqrt{\Delta(F_\infty,F_\infty)}= 0.
\end{split}
\end{equation*}
Conversely, assume that $\E \left[ \mathcal{A}_\infty f (F) \right] =0$ for all polynomial functions $f$. Then relation $(\ref{eq:com5})$ implies that, by choosing appropriate 
polynomials $f$, we have $\kappa_r(F)=\kappa_r(F_\infty)$ for $r=2,\cdots,d+1$. Now, combining this observation together with relation $(\ref{eq:com5})$, we infer that 
$$\E \left[ F^n \sum_{r=1}^{d+1} a_r \Big( \Gamma_{r-1}(F) - \E[\Gamma_{r-1}(F)] \Big)   \right]=0, \quad n \ge 2.$$
Using e.g. the Malliavin integrations by parts, and similar argument as in the proof of \cite[Proposition 5]{a-p-p}, the latter equation can be turned into a linear recurrent relation between the cumulants of $F$ of order up to $d+1$. Combining this with the knowledge of the 
$d+1$ first cumulants characterise all the cumulants of $F$ and hence the distribution $F$. Indeed, all the distributions in the second Wiener chaos are determined by their moments/cumulants \cite[Proposition 2.7.13, item 3]{n-pe-1}.

\end{proof}

\begin{proof}[Proof of Lemma \ref{sec:four-appr-stein-1}]
$(\Rightarrow)$.   Let us introduce two differential operators characterized by their
  symbols in Fourier domain. For smooth enough test functions, $f$,
  we define:
\begin{align*}
&\mathcal{A}_{d}(f)(x)=\frac{1}{2\pi}\int_{\mathbb{R}}\mathcal{F}(f)(\xi)\bigg(
   A_d(\iu \xi) \bigg)\exp(ix\xi)d\xi,\\ 
&\mathcal{B}_{d'}(f)(x)=\frac{1}{2\pi}\int_{\mathbb{R}}\mathcal{F}(f)(\xi)\bigg(B_{d'}(\iu
  \xi)\bigg)\exp(ix\xi)d\xi,
\end{align*}
with $\mathcal{F}(f)(\xi)=\int_{\mathbb{R}}f(x)\exp(-ix\xi)dx$. 
Integrating against smooth test functions the differential equation
satistifed by the characteristic function $\phi_F$, we have, for the
left hand side: 
\begin{align*}
\int_{\mathbb{R}}\mathcal{F}(\phi)(\xi)A_d(\iu \xi)\dfrac{d}{d\xi}\bigg(\phi_F(\xi)\bigg)d\xi&=\int_{\mathbb{R}}\mathcal{F}\big(\mathcal{A}_{d}(f)\big)(\xi)\dfrac{d}{d\xi}\bigg(\phi_F(\xi)\bigg)d\xi,\\
&=-\int_{\mathbb{R}}\dfrac{d}{d\xi}\bigg(\mathcal{F}\big(\mathcal{A}_{d}(f)\big)(\xi)\bigg)\phi_F(\xi)d\xi,\\ 
&=i\int_{\mathbb{R}}\mathcal{F}\big(x\mathcal{A}_{d}(f)\big)(\xi)\phi_F(\xi)d\xi,
\end{align*}
where we have used the standard fact
$d/d\xi(\mathcal{F}(f)(\xi))=-i\mathcal{F}(xf)(\xi)$. Similarly, for
the right hand side, we obtain: 
\begin{align*}
\operatorname{RHS}&=\int_{\mathbb{R}}\mathcal{F}(f)(\xi)\bigg(B_{d'}(\iu
                    \xi) \bigg)\phi_F(\xi)d\xi
=i\int_{\mathbb{R}}\mathcal{F}\big(\mathcal{B}_{d'}(f)\big)(\xi)\phi_F(\xi)d\xi.
\end{align*}
Thus,
\begin{align*}
\int_{\mathbb{R}}\mathcal{F}\big(x\mathcal{A}_{d}(f)-\mathcal{B}_{d'}(f)\big)(\xi)\phi_F(\xi)d\xi=0
\end{align*}
for all $f \in \mathcal{S}(\R)$.  Going back in the space domain, we
obtain the claim. 

\

$(\Leftarrow)$. We denote $S'(\mathbb{R})$ the space of tempered
distributions. Let $Y$ be a real valued random variable such that
$\E[|Y|]<+\infty$ and
\begin{equation}\label{eq:17}
\forall f\in S(\mathbb{R}),\ \E \bigg[  F\mathcal{A}_{d}f(F)-\mathcal{B}_{d'}f(F) \bigg]=0.
\end{equation}
Since $\E[|Y|]<+\infty$, the characteristic function of $Y$ is
differentiable on the whole real line. Working similarly as in the
first part of the proof (from space domain to Fourier domain),
identity \eqref{eq:17} leads to 
\begin{align*}
  A_d(\iu \cdot) \frac{d}{d \xi}\left( \phi_Y \right)(\cdot) = \iu B_{d'}(\iu \cdot)  \phi_F(\cdot)
\end{align*}
in $S'(\mathbb{R})$.  We also have $\phi_Y(0)=1$ thus, by
Cauchy-Lipschitz theorem, we have:
\begin{align*}
\forall\xi\in\mathbb{R},\ \phi_Y(\xi)=\phi_F(\xi).
\end{align*}
This concludes the proof of the Lemma.
\end{proof}

\begin{proof}[Proof of Theorem \ref{benjarras}]
Let $r_1 = \sum_{k=1}^d\lambda_k{m_k\alpha_k}{c_k}$.   The CF of
random variables as in   \eqref{eq:23} is 
  \begin{equation*}
    \phi_F(\xi) = e^{-i \xi r_1}
    \prod_{j=1}^d \bigg(1-\iu \xi \lambda_j
    c_j\bigg)^{-m_j\alpha_j}.  \end{equation*}
Taking derivatives with respect to $\xi$ one sees that 
\begin{equation*}
  \phi_F'(\xi) = - \iu \left(r_1 + \sum_{j=1}^d
    \frac{\lambda_km_k\alpha_kc_k}{1- \iu \xi \lambda_kc_k}\right) \phi_F(\xi)
\end{equation*}
which, after straightforward simplifications, becomes (we denote
$\nu_j=1/(c_j\lambda_j)$ and $m\alpha = (m_1\alpha_1, \ldots, m_d \alpha_d)$)
\begin{align*}
\prod_{k=1}^d(\nu_k-i\xi)\phi_F'(\xi)=-\iu \bigg\{r_1\prod_{k=1}^d(\nu_k-i\xi)-\sum_{k=1}^dm_k\alpha_k\prod_{l=1,l\ne k}^d(\nu_l-i\xi)\bigg\}\phi_F(\xi).
\end{align*}
All that remains is to compute explicitely the coefficients of the
polynomials on either side of the above, i.e. in Lemma
\ref{sec:four-appr-stein-1}'s $\mathcal{A}_d$ and $\mathcal{B}_{d}$.
First of all, let us consider the following polynomial in
$\mathbb{R}[X]$:
\begin{align*}
P(x)=\prod_{j=1}^d(\nu_j-x)=(-1)^d\prod_{j=1}^d(x-\nu_j).
\end{align*}
We denote by $p_0,...,p_d$ the coefficients of $\prod_{j=1}^d(X-\nu_j)$ in the basis $\{1,X,...,X^d\}$. Vieta formula readily give:
\begin{align*}
\forall k\in\{0,...,d\},\ p_k=(-1)^{d+k}e_{d-k}(\nu_1,...,\nu_d),
\end{align*}
It follows that the Fourier symbol of $\mathcal{A}_{d}$ is given by:
\begin{align*}
\prod_{k=1}^d(\nu_k-i\xi)=P(i\xi)=\sum_{k=0}^d(-1)^ke_{d-k}(\nu_1,...\nu_d)(i\xi)^k.
\end{align*}
Thus, we have, for $f$ smooth enough:
\begin{align*}
\mathcal{A}_{d}(f)(x)=\sum_{k=0}^d(-1)^{k}e_{d-k}(\nu_1,...,\nu_d)f^{(k)}(x).
\end{align*}
Let us proceed similarly for the operator $B_{d,m,\nu}$. We denote by
$P_k$ the following polynomial in $\mathbb{R}[X]$ (for any
$k\in\{1,...,d\}$): 
\begin{align*}
P_k(x)=(-1)^{d-1}\prod_{l=1,l\ne k}^d(x-\nu_l).
\end{align*}
A similar argument provides the following expression:
\begin{align*}
P_k(x)=\sum_{l=0}^{d-1}(-1)^le_{d-1-l}(\underline{\nu}_k)x^l,
\end{align*}
where
$\underline{\nu}_k=(\nu_1,...,\nu_{k-1},\nu_{k+1},...,\nu_d)$. Thus,
the symbol of the differential operator $B_{d}$ is given by:
\begin{align*}
\sum_{k=1}^dm_k\alpha_k\prod_{l=1,l\ne k}^d(\nu_l-i\xi)=\sum_{l=0}^{d-1}(-1)^l\bigg(\sum_{k=1}^dm_k\alpha_ke_{d-1-l}(\underline{\nu}_k)\bigg)(i\xi)^l.
\end{align*}
Thus, we have:
\begin{align*}
  B_{d}(f)(x)=\sum_{l=0}^{d-1}(-1)^l\bigg(\sum_{k=1}^dm_k\alpha_ke_{d-1-l}(\underline{\nu}_k)\bigg)f^{(k)}(x).
\end{align*}
Consequently, we obtain:
\begin{align*}
&\E\bigg[(F+r_1)\sum_{k=0}^d(-1)^{k}e_{d-k}(\nu_1,...,\nu_d)f^{(k)}(F)-\sum_{l=0}^{d-1}(-1)^l\bigg(\sum_{k=1}^dm_k\alpha_ke_{d-1-l}(\underline{\nu}_k)\bigg)f^{(k)}(F)\bigg]=0.
\end{align*}
Finally,  it is easy to see that 
\begin{align*}
\forall k\in\{0,...,d\},\
\bigg(\prod_{j=1}^dc_j\lambda_j  \bigg)e_{k}(\nu_1,...,\nu_d)=e_{d-k}({\lambda_1}{c_1},...,{\lambda_d}{c_d})
\end{align*}
and the conclusion follows.  
\end{proof}

\begin{proof}[Proof of Proposition \ref{same}] In order to lighten the
  notations, we consider the target law represented by
  $F=\sum_{i=1}^{d}\lambda_i (N_i^2-1),$ with $\lambda_j\ne \lambda_i$
  if $i\ne j$ and $\{N_1,...,N_d\}$ is a collection of i.i.d. standard
  normal random variables.  By (\ref{eq:SME}), we have, for any smooth
  functions:
\begin{align*}
\mathcal{A}_\infty f (x):= \sum_{l=2}^{d+1} (b_l - a_{l-1} x ) f^{(d+2-l)}(x) - a_{d+1} x f(x).
\end{align*}
By a re-indexing argument, we have:
\begin{align*}
\mathcal{A}_\infty f (x):= \sum_{k=1}^{d} (b_{d+2-k} - a_{d-k+1} x ) f^{(k)}(x) - a_{d+1} x f(x).
\end{align*}
As a warm up, we start by computing $a_{d+1}$ and $a_{d-k+1}$. We have, by definition:
\begin{align*}
a_{d+1}&=\dfrac{P^{(d+1)}(0)}{(d+1)! 2^d}=\frac{1}{2^d},
\end{align*}
where we have used the definition of the polynomial $P(X)$. Moreover, we have:
\begin{align*}
a_{d-k+1}&=\dfrac{P^{(d+1-k)}(0)}{(d+1-k)! 2^{d-k}}=\dfrac{(-1)^k}{2^{d-k}}e_k(\lambda_1,...,\lambda_d),
\end{align*}
where we have used the fact that $P^{(d+1-k)}(0)$ is equal to $(d+1-k)!$ times the $(d-k)$-th coefficient  of the polynomial $\prod (X-\lambda_j)$. Now, let us compute $b_{d+2-k}$. We have, for $k\in \{1,...,d\}$:
\begin{align*}
b_{d+2-k}&=\sum_{r=d+2-k}^{d+1}\dfrac{a_r}{(r+k-d-1)!}\kappa_{r+k-d}(F_\infty)\\
&=2^{k-d}\sum_{r=d+2-k}^{d+1} \frac{P^{(r)}(0)}{r!}\sum_{j=1}^d \lambda_j^{r+k-d}\\
&=2^{k-d}\sum_{r=d+2-k}^{d+1} (-1)^{d+r-1}e_{d-r+1}\big(\lambda_1,...,\lambda_d\big)\sum_{j=1}^d\lambda_j^{r+k-d}\\
&=(-1)^{k+1}e_{k-1}\big(\lambda_1,...,\lambda_d\big)\sum_{j=1}^d\lambda_j^{2}+...+(-1)e_{1}\big(\lambda_1,...,\lambda_d\big)\sum_{j=1}^d\lambda_j^{k}\\
&\qquad +e_{0}\big(\lambda_1,...,\lambda_d\big)\sum_{j=1}^d\lambda_j^{k+1}.
\end{align*}
Now the trick is to note that $\lambda_j e_{l-1}\big((\underline{\lambda}_j)\big)=e_{l}\big(\lambda_1,...,\lambda_d\big)-e_{l}\big((\underline{\lambda}_j)\big)$. Thus, we have:
\begin{align*}
(-1)e_{1}\big(\lambda_1,...,\lambda_d\big)\sum_{j=1}^d\lambda_j^{k}+e_{0}\big(\lambda_1,...,\lambda_d\big)\sum_{j=1}^d\lambda_j^{k+1}=-\sum_{j=1}^d\lambda_j^{k}e_{1}\big((\underline{\lambda}_j)\big).
\end{align*}
Using the previous equality recursively, we obtain:
\begin{align*}
b_{d+2-k}&=2^{k-d}\bigg[(-1)^{k+1}e_{k-1}\big(\lambda_1,...,\lambda_d\big)\sum_{j=1}^d\lambda_j^{2}+(-1)^k\sum_{j=1}^d\lambda_j^3 e_{k-2}\big((\underline{\lambda}_j)\big)\bigg],\\
&=2^{k-d}(-1)^{k}\bigg[\sum_{j=1}^d\lambda_j^2\bigg(-e_{k-1}\big(\lambda_1,...,\lambda_d\big)+e_{k-1}\big(\lambda_1,...,\lambda_d\big)-e_{k-1}\big((\underline{\lambda}_j)\big)\bigg)\bigg],\\
&=2^{k-d}(-1)^{k+1}\sum_{j=1}^d\lambda_j^2e_{k-1}\big((\underline{\lambda}_j)\big),\\
&=2^{k-d}(-1)^{k+1}\sum_{j=1}^d\lambda_j\bigg(e_{k}\big(\lambda_1,...,\lambda_d\big)-e_{k}\big((\underline{\lambda}_j)\big)\bigg).
\end{align*}
Wrapping everything up together, this ends the proof of the proposition.
\end{proof}


 \section*{Acknowledgments}
 YS gratefully acknowledges support from FNRS under Grant MIS
 F.4539.16. We thank C\'eline Esser and Michel Rigo for fruitful
 discussions. Also,
 EA would like to thank the members of the Institute for
 Mathematical Stochastics at Otto-von-Guericke-Universitat Magdeburg,
 and in particular, Claudia Kirch for their warm hospitality where
 most of this research was carried out, and to Finnish Cultural
 Foundation for financial support.

\addcontentsline{toc}{section}{References}%

\emph{E-mail address}, B. Arras {\tt arrasbenjamin@gmail.com }

\emph{E-mail address}, E. Azmoodeh {\tt ehsan.azmoodeh@rub.de}

\emph{E-mail address}, G. Poly {\tt guillaume.poly@univ-rennes1.fr }

\emph{E-mail address}, Y. Swan  {\tt yswan@ulg.ac.be }

\end{document}